\newtheorem{theorem}{Theorem}
\newtheorem{proposition}{Proposition}
\newtheorem{lemma}{Lemma}
\theoremstyle{definition}
\newtheorem{assumption}{Assumption}
\newtheorem{remark}{Remark}
\newcommand{\norm}[1]{\|#1\|}
\newcommand{\supnorm}[1]{\norm{#1}_{\infty}}
\newcommand{\lipnorm}[1]{\norm{#1}_{\textup{Lip}}}
\newcommand{\bvnorm}[1]{\norm{#1}_{\textup{BV}}}
\newcommand{\lnorm}[2]{\norm{#1}_{L^{#2}}}
\newcommand{\pcal}{\mathcal{P}}
\newcommand{\naturals}{\mathbb{N}}
\newcommand{\reals}{\mathbb{R}}
\newcommand{\torus}{\mathbbm{T}}
\newcommand{\bigo}{O}
\newcommand{\charfun}{\mathbbm{1}}
\newcommand{\diff}{\, \mathrm{d}}
\newcommand{\haus}{\mathcal{H}}
\newcommand{\leb}{m}
\newcommand{\geqs}{\gtrsim}
\DeclareMathOperator{\dist}{dist}
\DeclareMathOperator{\supp}{supp}
\DeclareMathOperator{\var}{var}
\title{Strong Borel--Cantelli lemmas for recurrence}
\author{Tomas Persson}
\address{Centre for Mathematical Sciences,
Lund University, Box~118, 221~00 Lund, Sweden}
\email{tomaspersson@gmx.com}
\author{Alejandro Rodriguez Sponheimer}
\address{Centre for Mathematical Sciences,
Lund University, Box~118, 221~00 Lund, Sweden}
\email{alejandro.rodriguez\_sponheimer@math.lth.se}
\date{February 6, 2025}
\subjclass[2020]{37B20 (Primary); 37D20, 37A05 (Secondary)}
\keywords{Strong Borel--Cantelli, recurrence, short return times,
multiple mixing}
\begin{document}

\begin{abstract}
  Let $(X,T,\mu,d)$ be a metric measure-preserving system for which
  $3$-fold correlations decay exponentially for Lipschitz continuous
  observables.
  Suppose that $(M_k)$ is a sequence satisfying some weak decay
  conditions and suppose there exist open balls $B_k(x)$ around $x$
  such that $\mu(B_k(x)) = M_k$.
  Under a short return time assumption, we prove a strong
  Borel--Cantelli lemma, including an error term, for
  recurrence, i.e., for $\mu$-a.e.\ $x \in X$,
  \[
    \sum_{k=1}^{n} \charfun_{B_k(x)} (T^k x) = \Phi(n) + \bigo
    \bigl( \Phi(n)^{1/2} (\log \Phi(n))^{3/2 + \varepsilon}
    \bigr),
  \]
  where $\Phi(n) = \sum_{k=1}^{n} \mu(B_k(x))$.

  Applications to systems include some non-linear piecewise expanding
  interval maps and hyperbolic automorphisms of $\torus^2$.
\end{abstract}

\maketitle

\section{Introduction}

A fundamental result in dynamics is the Poincar\'e Recurrence Theorem.
One formulation of it is, for a metric measure-preserving system
(m.m.p.s.)\ $(X,T,\mu,d)$, where $(X,d)$ is separable and $\mu$ is a
finite Borel measure, one has that $\mu$-a.e.\ point $x \in X$ is
\emph{recurrent}, i.e., for $\mu$-a.e.\ $x \in X$ there exists a
sequence $n_i \to \infty$ such that
\[
  \lim_{i \to \infty} d(x,T^{n_i}x) = 0.
\]
A shortcoming of the theorem is that it is only qualitative and not
quantitative; it says nothing about the rate of convergence. A
pioneering result establishing such a rate is due to Boshernitzan
\cite{Boshernitzan} in \textup{1993} and assumes very little of the
underlying space.

\begin{theorem}[Boshernitzan]
  Let $(X,T,\mu,d)$ be an m.m.p.s.\ and assume that, for some
  $\alpha>0$, the Hausdorff $\alpha$-measure, $\haus_\alpha$, is
  $\sigma$-finite on $X$. Then for $\mu$-a.e.\ $x \in X$,
  \[
    \liminf_{n \to \infty} n^{1/\alpha} d(x,T^{n}x) < \infty.
  \]
  Moreover, if $\haus_\alpha(X) = 0$, then for $\mu$-a.e.\ $x\in X$,
  \[
    \liminf_{n \to \infty} n^{1/\alpha} d(x,T^{n}x) = 0.
  \]
\end{theorem}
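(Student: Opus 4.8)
The plan is to bound the integral of a single, carefully chosen non‑negative observable read off from a fine partition, control that integral by approximating sums for $\haus_{\alpha}$, and then extract the $\liminf$ statement by refining the partition. First I would reduce both claims to showing that, for $\mu$-a.e.\ $x$,
\[
  \liminf_{n \to \infty} n\, d(x,T^{n}x)^{\alpha} < \infty
  \qquad \bigl(\text{and } = 0 \text{ when } \haus_{\alpha}(X) = 0\bigr);
\]
this is equivalent to the stated conclusions since $t \mapsto t^{\alpha}$ is an increasing homeomorphism of $[0,\infty)$, so that taking $\alpha$-th powers commutes with $\liminf$. I would discard the measurable set of periodic points, where the conclusion is trivial, normalise $\mu$ to a probability, and, using $\sigma$-finiteness, write $X = \bigcup_{j} Y_{j}$ with $\haus_{\alpha}(Y_{j}) < \infty$ (simply $Y_{1} = X$ if $\haus_{\alpha}(X) < \infty$); it then suffices to work on one $Y_{j}$ at a time.

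For the core step, fix $p \in \naturals$ and, from the definition of $\haus_{\alpha}$, choose a countable Borel partition $\mathcal{P}_{p}$ of $Y_{j}$ into sets of diameter at most $1/p$ with
\[
  \sum_{E \in \mathcal{P}_{p}} (\operatorname{diam} E)^{\alpha}
  < \haus_{\alpha}(Y_{j}) + 2^{-j}/p ,
\]
obtained by disjointifying a near-optimal $(1/p)$-cover by closed sets, which does not increase diameters. Let $r_{p}(x)$ be the first return time of $x$ to the cell $\mathcal{P}_{p}(x)$ containing it; by Poincar\'e recurrence $r_{p}(x) < \infty$ for $\mu$-a.e.\ $x$. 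Since $x$ and $T^{r_{p}(x)}x$ lie in the same cell, $d(x,T^{r_{p}(x)}x) \le \operatorname{diam}\mathcal{P}_{p}(x)$, whereas Kac's lemma gives $\int_{E} r_{E}\diff\mu \le \mu(X) = 1$ for each cell $E$ (no invertibility is needed: with $D_{n} = \{\, y : T^{i}y \notin E,\ 1 \le i \le n \,\}$ one has $\mu(D_{n}) = \mu(D_{n-1}) - \mu(E \cap D_{n-1})$, hence $\int_{E} r_{E}\diff\mu = \sum_{n \ge 1} \mu(E \cap D_{n-1}) \le \mu(D_{0}) = 1$). Multiplying the distance bound by $r_{p}$, integrating cell by cell, and summing gives
\[
  \int_{Y_{j}} r_{p}(x)\, d(x,T^{r_{p}(x)}x)^{\alpha}\diff\mu(x)
  \le \sum_{E \in \mathcal{P}_{p}} (\operatorname{diam} E)^{\alpha}
  < \haus_{\alpha}(Y_{j}) + 2^{-j}/p .
\]

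Now I would let $p \to \infty$. For a non-periodic $x$ one has $r_{p}(x) \to \infty$, because $d(x,T^{r_{p}(x)}x) \le 1/p$ must eventually be strictly below $\min_{1 \le n \le N} d(x,T^{n}x) > 0$ for each fixed $N$; so for large $p$ the number $r_{p}(x)\, d(x,T^{r_{p}(x)}x)^{\alpha}$ is a value of $n \mapsto n\, d(x,T^{n}x)^{\alpha}$ at an arbitrarily large time, which yields the pointwise bound $\liminf_{n} n\, d(x,T^{n}x)^{\alpha} \le \liminf_{p} r_{p}(x)\, d(x,T^{r_{p}(x)}x)^{\alpha}$ for $\mu$-a.e.\ $x$. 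Fatou's lemma applied to the last displayed estimate then gives $\int_{Y_{j}} \liminf_{n} n\, d(x,T^{n}x)^{\alpha}\diff\mu \le \haus_{\alpha}(Y_{j}) < \infty$, so this $\liminf$ is finite $\mu$-a.e.\ on $Y_{j}$, and a union over $j$ proves the first assertion. When $\haus_{\alpha}(X) = 0$, the same argument with $Y_{1} = X$ leaves only the bound $2^{-j}/p \to 0$, forcing $\liminf_{n} n\, d(x,T^{n}x)^{\alpha} = 0$ $\mu$-a.e., which is the second assertion.

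The step I expect to be the real obstacle is precisely this last passage: converting the integrable quantity $r_{p}(x)\, d(x,T^{r_{p}(x)}x)^{\alpha}$, which only records one particular return time, into an honest bound on the $\liminf$ over all times. That is what forces the use of a whole sequence of partitions together with the elementary but essential fact that return times to shrinking cells diverge for non-periodic points; the remaining ingredients --- Kac's inequality, disjointifying covers, Fatou's lemma --- are routine bookkeeping.
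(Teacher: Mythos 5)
Your argument is correct and complete: the reduction to $\liminf_n n\,d(x,T^nx)^{\alpha}$, the cell-by-cell bound $\int_E r_E\,d(x,T^{r_E(x)}x)^{\alpha}\diff\mu \leq (\operatorname{diam}E)^{\alpha}\int_E r_E\diff\mu \leq (\operatorname{diam}E)^{\alpha}$ via Kac's inequality, and the passage to the $\liminf$ using $r_p(x)\to\infty$ for non-periodic points together with Fatou all hold up. Note, however, that the paper does not prove this statement at all --- it is quoted as background and attributed to Boshernitzan --- so there is no proof in the paper to compare against; what you have written is essentially the standard first-return-time proof of Boshernitzan's theorem, and it is a valid one.
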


One may restate Boshernitzan's result: given the assumptions, for
$\mu$-a.e.\ ~$x$, there exists a constant $c(x) > 0$ such that
\[
  \sum_{k=1}^{\infty} \charfun_{B(x,c(x)k^{-1/\alpha})}(T^{k}x)
  = \infty,
\]
where $B(x,r)$ denotes the open ball around $x$ with radius $r$. More
generally, one can consider the series
\[
  \sum_{k=1}^{\infty} \charfun_{B(x,r_k(x))}(T^{k}x)
\]
for various decaying radii $r_k(x)$.
Results establishing the $\mu$-a.e.\ convergence and divergence of the
series depending on the convergence and divergence of
$\sum_{k=1}^{\infty} \mu\bigl( B(x,r_k(x)) \bigr)$ are called dynamical
Borel--Cantelli lemmas for recurrence (RBC) and have been proven for
various systems with exponential mixing.
For example, Hussain, Li, Simmons and
Wang \cite{HLSW} prove an RBC with an exact dichotomy for conformal
systems in which the measure is Ahlfors regular, and Kirsebom, Kunde
and Persson \cite{KKP} prove a dynamical RBC under fewer restrictions
on the system while also proving a dichotomy result for hyperbolic
linear maps on $\torus^{d}$.
These recurrence results mirror dynamical Borel--Cantelli lemmas for
the shrinking target problem, in which the `targets',
$B_k = B(y_k, r_k)$, do not depend on the initial point $x$. Some
results of this type are \cite{Philipp, CK, Dolgopyat, Kim}.
A Borel--Cantelli lemma combining recurrence and the shrinking target
problem was established by Kleinbock and Zheng \cite{KZ} for uniformly
mixing and conformal systems.

In the case that 
$\sum_{k=1}^{\infty} \mu\bigl(B(x,r_k(x))\bigr) = \infty$,
it is sometimes possible to prove a stronger result than a $0$--$1$
law and obtain the asymptotic \emph{rate} of recurrence
\begin{equation}
  \label{eq:SBC}
  \sum_{k=1}^{n} \charfun_{B(x,r_k(x))}(T^{k}x)
  \sim \sum_{k=1}^{n} \mu\bigl(B(x,r_k(x))\bigr).
\end{equation}
We will call results establishing such a limit `strong Borel--Cantelli
lemmas for recurrence' (RSBC).
Such results have been studied only recently in
\cite{He, LLSV, Persson, ARS}.
In \cite{Persson}, Persson established an RSBC lemma for interval maps
satisfying exponential decay of correlations. Persson assumes that the
measures of the balls $\mu\bigl(B(x,r_k(x))\bigr)$ do not decay too
quickly and decay in a sufficiently well-behaved manner. Although these
assumptions are weak enough to include balls decaying at a rate slower
than $1/k^{\gamma}$ for any $\gamma \in (0,1)$, they do exclude the
critical decay rate of $1/k$. Persson's result was subsequently
extended by Rodriguez Sponheimer \cite{ARS} to a class of higher
dimensional systems including Axiom~A diffeomorphisms. Rodriguez
Sponheimer assumes $3$-fold decay of correlations, thin annuli and the
same decay conditions on $\mu\bigl( B(x,r_k(x)) \bigr)$ as Persson.
More recently, He \cite{He} proved an RSBC lemma for expanding maps
on $[0,1]^{d}$ preserving an absolutely continuous measure and for
which $B_k(x)$ are a sequence of hyperrectangles with sides
$r_{k,1},\dotsc,r_{k,d}$. The restrictions allow for He to relax the
decay assumption of the previous two papers, requiring only that
$\lim_{k \to \infty} \max_{n=1,\dotsc,d}r_{k,n} = 0$.
Although his main result does not include an error term, earlier on in
his paper, He proves a different statement that does include an error
term.
In \cite{LLSV}, Levesley, Li, Simmons and Velani prove an RSBC lemma
with an error term for piecewise linear expanding maps preserving the
Lebesgue measure while completely removing any decay condition on
$r_{k,n}$ apart from
$\sum_{k=1}^{\infty} r_{k,1} \ldots r_{k,d} = \infty$.

Much like in the case of dynamical Borel--Cantelli lemmas, the results
of the previous paragraph mimic those of dynamical strong
Borel--Cantelli lemmas for the shrinking targets problem. Many papers
establishing dynamical Borel--Cantelli lemmas also establish dynamical
strong Borel--Cantelli lemmas with an error term (see the
aforementioned \cite{Philipp, CK, Kim}).

In this paper, we improve existing results \cite{Persson,ARS} under the
additional assumption of a short return time condition. We establish an
error term for the limit in \eqref{eq:SBC} and are able to replace the
decay conditions on $\mu\bigl(B(x,r_k(x))\bigr)$ with one that includes
the critical rate $1/k$. We also improve correlation estimates found in
\cite{Persson, ARS}.
Under the aforementioned assumptions, we can enlarge the class of
measures considered by He \cite{He} to measures that are not
necessarily absolutely continuous. We also consider systems for which
correlations decay exponentially for Lipschitz continuous observables
instead of H{\"o}lder continuous versus $L^{1}$ observables.
We can also extend the result of \cite{LLSV} to measures other than the
Lebesgue measure, again, under the aforementioned assumptions.

In order to state the main results, we require some setting up;
instead, we momentarily delay them and settle on stating two main
applications for the time being.
In the following two propositions, we consider a monotonically
decreasing sequence $(M_k)$ such that and $M_k \leq C (\log k)^{-2}$
for some constant $C > 0$. For each $k$, we denote by $B(x,r_k(x))$ the
open ball around $x$ with measure $M_k$, which exists under the
assumptions of the propositions. Finally, let
$\Phi(n) = \sum_{k=1}^{n} \mu\bigl(B(x,r_k(x))\bigr)
= \sum_{k=1}^{n} M_k$.

\begin{proposition}
  \label{prop:interval}
  Suppose $T \colon [0,1] \to [0,1]$ is a piecewise expanding map and
  that the intervals $C_j, j=1,\dotsc,n$ are a finite partition of
  $[0,1]$ such that $(0,1) \subset T(C_j)$ for each $j$. Assume further
  that $T|_{C_j}$ is smooth for each $j$ and that $3 \leq |T'| \leq D$
  for some constant $D \geq 3$.
  Let $\mu$ be a Gibbs measure to a H{\"o}lder continuous potential
  with zero pressure.
  Then for all $\varepsilon > 0$,
  \[
    \sum_{k=1}^{n} \charfun_{B(x,r_k(x))}(T^{k}x)
    = \Phi(n)+ \bigo \bigl( \Phi(n)^{1/2} (\log
    \Phi(n))^{3/2+\varepsilon} \bigr)
  \]
  for $\mu$-a.e.\ $x$.
\end{proposition}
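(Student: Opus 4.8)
The plan is to derive Proposition~\ref{prop:interval} from our main theorem by checking its hypotheses for the system $([0,1],T,\mu,|\cdot\,|)$: that it is a metric measure-preserving system, that $3$-fold correlations of Lipschitz observables decay exponentially, that the short return time condition holds, that for each $k$ there is a ball $B(x,r_k(x))$ with $\mu(B(x,r_k(x)))=M_k$, and that $(M_k)$ obeys the required decay. The decay of $(M_k)$ is part of the standing hypotheses preceding the proposition, so nothing is to be done there. For the existence of the balls, one uses that a Gibbs measure for a H\"older potential on such a map is non-atomic and gives zero mass to each sphere $\{\,y : |x-y| = r\,\}$ (at most two points); hence $r \mapsto \mu(B(x,r))$ is continuous and increases from $0$ to $\mu([0,1])$, so $B(x,r_k(x))$ exists whenever $M_k < \mu([0,1])$, i.e.\ for all but finitely many $k$, which is harmless.

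For the correlation estimate I would pass to the transfer operator $\mathcal{L} = \mathcal{L}_{\phi}$, $\mathcal{L}f(x) = \sum_{Ty=x} e^{\phi(y)} f(y)$, of $T$ with respect to the potential $\phi$. Since $P(\phi) = 0$, its leading eigenvalue is $1$ with a strictly positive H\"older eigenfunction $h$ and conformal eigenmeasure $\nu$, and $\mu = h\,\diff\nu$ is $T$-invariant. Because each branch is onto $(0,1)$ and $\inf|T'| \ge 3$, one obtains a Lasota--Yorke inequality on $\mathrm{BV}$ with essential contraction coefficient at most $2/3 < 1$; together with compactness of the unit ball of $\mathrm{BV}$ in $L^{1}$ this gives quasi-compactness of $\mathcal{L}$ and a spectral gap. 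Writing a $3$-fold correlation $\int f\,(g\circ T^{a})\,(\psi\circ T^{a+b})\,\diff\mu$ by pulling the outer factors through $\mathcal{L}^{a}$ and $\mathcal{L}^{a+b}$, and using that multiplication by a Lipschitz function is bounded on $\mathrm{BV}$, the spectral gap yields exponential decay in $\min(a,b)$ with constants controlled by the Lipschitz norms. This is standard for piecewise expanding full-branch interval maps and should occupy only a short lemma; it also supplies the improved correlation estimates mentioned in the introduction.

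The main point is the short return time condition, for which one must bound $\mu(\{\,x : |T^{j}x-x| < r\,\})$. I would decompose this set over the rank-$j$ cylinders; on each such cylinder $C$, $T^{j}$ is a homeomorphism onto $(0,1)$ with $|(T^{j})'| \ge 3^{j}$, so $|C| \le 3^{-j}$ and the condition $T^{j}x \in (x-r,x+r)$ forces $T^{j}x$ to lie in an interval $I$ of length at most $|C| + 2r \le 3^{-j} + 2r$. By the Gibbs property the normalised pushforward of $\mu|_{C}$ under $T^{j}|_{C}$ is comparable to $\mu$, whence $\mu(C \cap (T^{j}|_{C})^{-1}I) \lesssim \mu(C)\,\sup_{y}\mu(B(y,3^{-j}+2r))$; summing over $C$ gives $\mu(\{\,x : |T^{j}x-x| < r\,\}) \lesssim \sup_{y}\mu(B(y,3^{-j}+2r))$. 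Since a Gibbs measure of this type has a uniformly positive lower pointwise dimension, $\sup_{y}\mu(B(y,\delta)) \lesssim \delta^{s}$ for some $s > 0$, so taking $r$ of the order dictated by the short return condition (comparable to $\rho^{j}$ for a suitable $\rho < 1$) makes the right-hand side summable in $j$, and Borel--Cantelli gives the short return property $\mu$-a.e. I expect this step to require the most care, not because any single estimate is hard, but because one must match the abstract form of the condition to the concrete cylinder geometry and verify the lower bound on the pointwise dimension of the given Gibbs measure.

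With all the hypotheses in place, the main theorem applies and yields, for $\mu$-a.e.\ $x$,
\[
  \sum_{k=1}^{n} \charfun_{B(x,r_k(x))}(T^{k}x)
  = \Phi(n) + \bigo\bigl(\Phi(n)^{1/2}(\log\Phi(n))^{3/2+\varepsilon}\bigr),
\]
which is the assertion of Proposition~\ref{prop:interval}.
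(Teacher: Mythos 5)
Your overall strategy --- reduce to the dynamical theorem by checking the hypotheses --- is the right one, and your treatment of the existence of the balls and of the $3$-fold decorrelation (transfer operator with a Lasota--Yorke inequality on $BV$, spectral gap, multiplication by Lipschitz functions bounded on $BV$) matches the paper in substance; the paper simply cites the $L^{1}$--versus--$BV$ decay from the literature and iterates it, and additionally records the small extra step needed for the second decorrelation inequality \eqref{eq:doc:partial}, namely $\bvnorm{\varphi_2 \circ T^{k}} \leq \lipnorm{\varphi_2}\bvnorm{T}^{k}$ for $k \leq \sigma l$ with $\sigma$ small. The convergent case $\sum M_k < \infty$ is also fine as you leave it, since the general theorem already disposes of it.

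The genuine gap is in your third paragraph: you have misread what the short return time condition (Assumption~\ref{assumption:srt}) asks for. It is not a bound on $\mu(\{x : |T^{j}x - x| < r\})$ --- that is essentially $\mu(E_j)$ itself, which is already controlled by Lemma~\ref{lem:estimates:general} and is comparable to $M_j$. What is required is a bound on the \emph{pairwise correlation} $\mu(E_k \cap E_{k+l})$ for the short gaps $1 \leq l \leq (\log k)^2$, of the form $C\mu(E_k)^{1+\delta}(\log k)^{N} + \mu(E_k)\psi(l)$ with $\psi$ summable; this is precisely the range of $(k,l)$ where neither mixing estimate \eqref{eq:mixing} nor \eqref{eq:largel} is any help. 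Your cylinder computation yields $\mu(E_j) \lesssim \sup_y \mu(B(y, 3^{-j}+2r))$ and says nothing about the joint event $\{T^{k}x \in B(x,r_k(x))\} \cap \{T^{k+l}x \in B(x,r_{k+l}(x))\}$. The paper's proof of this estimate (Lemma~\ref{lem:srt:interval}) is the real content of the proposition and rests on ideas absent from your sketch: $E_k$ meets each rank-$k$ cylinder $C_{j_1,\dots,j_k}$ in a single interval $I_{j_1,\dots,j_k}$ containing the unique $k$-periodic point of that cylinder; using $|T'| \geq 3$ and the fact that $r_k$ is $1$-Lipschitz one shows $T^{k}I_{j_1,\dots,j_k}$ is covered by three balls of measure $M_k$, whence by conformality $\mu(I_{j_1,\dots,j_k}) \leq C M_k\, \mu(C_{j_1,\dots,j_k})$; one then decomposes $I_{j_1,\dots,j_k} \cap E_{k+l}$ over the rank-$(k+l)$ subcylinders it contains, handling the two boundary fragments with the quasi-Bernoulli property and the exponential decay $\mu(C_{j_{k+1},\dots,j_{k+l}}) \leq C\lambda^{-l}$, which produces the summable term $C\mu(E_{k+l})\lambda^{-l}$. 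Finally, monotonicity of $(M_k)$ converts $\mu(E_{k+l})$ into $\mu(E_k)$ so that the estimate takes the exact form of Assumption~\ref{assumption:srt} with $N=0$, $\delta=1$. Without some version of this argument your proof does not close, because the hypothesis you verify is not the one the main theorem consumes.
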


Since in the following theorem, $X = \torus^2$ and $\mu = \leb$ is the
Lebesgue measure, we have that $r_k(x) = r_k$ and $M_k = \pi r_k^2$.
Thus $\Phi(n) = \pi \sum_{k=1}^{n} r_k^2$.

\begin{proposition}
  \label{prop:torus}
  Let $A$ be a $2 \times 2$ integer matrix with
  $\lvert\det A\rvert = 1$ and no eigenvalues on the unit circle.
  Define $T \colon \torus^2 \to \torus^2$ by $Tx = Ax \mod 1$.
  Then for all $\varepsilon > 0$,
  \[
    \sum_{k=1}^{n} \charfun_{B(x,r_k)}(T^{k}x)
    = \Phi(n)+ \bigo \bigl( \Phi(n)^{1/2} (\log
    \Phi(n))^{3/2+\varepsilon} \bigr)
  \]
  for $\leb$-a.e.\ $x$.
\end{proposition}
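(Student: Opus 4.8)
The plan is to deduce Proposition~\ref{prop:torus} from our main theorem by checking that the metric measure-preserving system $(\torus^2, T, \leb, d)$ --- with $d$ the flat metric, $\leb$ normalized Haar (Lebesgue) measure, the sequence $(M_k)$ as given, and the balls $B(x, r_k)$ where $r_k = \sqrt{M_k/\pi}$ --- satisfies all of its hypotheses. The structural requirements are immediate: $\torus^2$ is compact, hence separable; $\leb$ is a Borel probability measure; and $T$ preserves $\leb$ since $\lvert\det A\rvert = 1$. Because $M_k \le C(\log k)^{-2} \to 0$, for all large $k$ we have $r_k < 1/2$, which is below the injectivity radius, so $B(x,r_k)$ is isometric to a Euclidean disc and $\leb(B(x,r_k)) = \pi r_k^2 = M_k$ for every $x$ (the radius does not depend on $x$, which only helps); the finitely many small indices alter both $\Phi(n)$ and $\sum_{k} \charfun_{B(x,r_k)}(T^kx)$ by $\bigo(1)$ and may be ignored. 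Finally, monotonicity of $(M_k)$ together with $M_k \le C(\log k)^{-2}$ is precisely the weak decay condition the main theorem asks for.

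Next I would verify exponential decay of $3$-fold correlations for Lipschitz observables. Since $A$ is an integer matrix with $\lvert\det A\rvert = 1$ and no eigenvalue on the unit circle, its two eigenvalues are real and of the form $\lambda, \pm\lambda^{-1}$ with $\lvert\lambda\rvert > 1$, so $T$ is a (linear) Anosov diffeomorphism and $\leb$ is a smooth $T$-invariant measure. For such systems, exponential decay of $k$-fold correlations for H{\"o}lder observables is classical: a Markov partition conjugates $T$ to a two-sided subshift of finite type on which $\leb$ becomes a Gibbs measure for a H{\"o}lder potential, and iterating the Ruelle transfer operator and using its spectral gap gives, for H{\"o}lder $\phi_0, \phi_1, \phi_2$,
\[
  \Bigl\lvert \int \phi_0\,(\phi_1 \circ T^{m})\,(\phi_2 \circ T^{m+n}) \diff\leb
  - \prod_{i} \int \phi_i \diff\leb \Bigr\rvert
  \le C\,\theta^{\min(m,n)}\,\lipnorm{\phi_0}\,\lipnorm{\phi_1}\,\lipnorm{\phi_2}
\]
for some $\theta \in (0,1)$. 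Since Lipschitz functions are H{\"o}lder with $\lipnorm{\cdot}$ controlling the relevant norms, this is the required estimate, the $2$-fold version being the case $\phi_2 \equiv 1$. (This class of systems is already covered, for the purpose of such $3$-fold estimates, by \cite{ARS}.)

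The one hypothesis whose verification needs a dynamical input specific to the system is the short return time assumption, and this is where I would focus. Fix $j \ge 1$. Since $\lvert\lambda\rvert \ne 1$, neither $\lambda^j$ nor $(\pm\lambda^{-1})^j$ equals $1$, so $\det(A^j - I) = (\lambda^j - 1)\bigl((\pm\lambda^{-1})^j - 1\bigr) \ne 0$ and the integer matrix $A^j - I$ is nonsingular. Hence $x \mapsto (A^j - I)x \bmod 1$ is a surjective, $\lvert\det(A^j - I)\rvert$-to-one endomorphism of $\torus^2$; being a continuous surjective homomorphism of a compact group it preserves Haar measure $\leb$. Because $d(T^j x, x) < r$ if and only if $(A^j - I)x \bmod 1$ lies in the metric ball $B(0, r)$, we obtain, for every $j \ge 1$ and every $r < 1/2$,
\[
  \leb\bigl( \{\, x \in \torus^2 : d(T^j x, x) < r \,\} \bigr)
  = \leb\bigl( B(0, r) \bigr) = \pi r^2 .
\]
(Equivalently, $T$ has only $\lvert\det(A^j - I)\rvert = \bigo(\lvert\lambda\rvert^{j})$ points of period $j$, so genuinely short recurrences are rare.) A union bound over $1 \le j \le N$ then gives $\leb(\{x : d(T^j x, x) < r \text{ for some } 0 < j \le N\}) \le N\pi r^2$, which is the estimate the short return time hypothesis requires. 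In the proof of the main theorem this is used to control the near-diagonal covariances $\operatorname{Cov}\bigl(\charfun_{B(x,r_k)}(T^{k}x), \charfun_{B(x,r_\ell)}(T^{\ell}x)\bigr)$ for $\lvert k - \ell\rvert$ small, since there both indicators being nonzero forces $T^{\min(k,\ell)}x$ to return within $r_k + r_\ell$ of itself in time $\lvert k - \ell\rvert$; the window length one must treat this way is dictated by the rate $\theta$, namely $N \asymp \log(1/M_k)$, and the decay assumption $M_k \le C(\log k)^{-2}$ is exactly what makes the accumulated near-diagonal contribution small enough to be swallowed by the claimed error term. I expect this bookkeeping --- balancing window length against the radii and the $(\log k)^{-2}$ decay so that the variance bound yields error $\bigo(\Phi(n)^{1/2}(\log\Phi(n))^{3/2+\varepsilon})$ via the relevant G\'al--Koksma-type lemma --- to be the delicate part; the recurrence identity $\leb\{x : d(T^j x, x) < r\} = \pi r^2$ that feeds it is itself clean. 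With all hypotheses verified, Proposition~\ref{prop:torus} follows from the main theorem.
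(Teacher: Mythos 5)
Your reduction to the main theorem is the right strategy, and most of the routine verifications (invariance of $\leb$, $\leb(B(x,r_k))=\pi r_k^2=M_k$, the Frostman and thin-annuli properties, and the $3$-fold decorrelation \eqref{eq:doc:total} via Kotani--Sunada or transfer operators) go through as you say. The identity $\leb\{x: d(T^jx,x)<r\}=\pi r^2$, obtained from the measure-preserving endomorphism $x\mapsto (A^j-I)x \bmod 1$, is also correct and is exactly \eqref{eq:Ekmeasure:torus}. But your verification of the short return time condition (Assumption~\ref{assumption:srt}) does not work, and this is the one genuinely system-specific input. Assumption~\ref{assumption:srt} demands $\mu(E_k\cap E_{k+l})\le C\mu(E_k)^{1+\delta}(\log k)^N+\mu(E_k)\psi(l)$ with $\psi$ \emph{summable} in $l$. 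Your argument --- that $x\in E_k\cap E_{k+l}$ forces $T^kx$ to return within $r_k+r_{k+l}$ of itself in time $l$, hence $\leb(E_k\cap E_{k+l})\le \leb\{y: d(T^ly,y)<r_k+r_{k+l}\}\le 4\pi r_k^2=4\leb(E_k)$ --- yields only $\psi(l)\equiv 4$, which is not summable. Concretely, in the variance estimate the near-diagonal block $\sum_{k=m}^{n}\sum_{l\le(\log k)^2}\leb(E_k\cap E_{k+l})$ would then be of order $\sum_{k=m}^n(\log k)^2\leb(E_k)\asymp n-m$, whereas the G\'al--Koksma lemma requires it to be $O\bigl(\sum_{k=m}^n\leb(E_k)\bigr)=O(\Phi(n)-\Phi(m))$; since $\leb(E_k)\to 0$ these are incomparable, so no choice of ``window length'' rescues the bookkeeping you defer.

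What is actually needed, and what the paper proves in Lemma~\ref{lem:srt:torus}, is the much stronger bound $\leb(E_k\cap E_{k+l})\le C\leb(E_k)\leb(E_{k+l})+C\lambda^{-l}\sqrt{\leb(E_k)\leb(E_{k+l})}$, whose exponential factor $\lambda^{-l}$ makes $\psi$ summable. This comes not from the triangle inequality but from the geometry of $E_k$: it is a union of $\sim\lambda^k$ ellipses centred at the $k$-periodic points, each of size $\sim r_k$ in the stable direction but only $\sim\lambda^{-k}r_k$ in the unstable direction, and a Diophantine separation lemma (Lemma~\ref{lem:separation}, via Liouville's theorem applied to the lattice $(A^l-I)^{-1}\mathbb{Z}^2$) controls how many components of $E_{k+l}$ can meet a given component of $E_k$. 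This geometric/arithmetic counting argument is absent from your proposal and cannot be replaced by the return-time identity alone. A secondary, smaller omission: Assumption~\ref{assumption:doc} also requires the estimate \eqref{eq:doc:partial} with error $e^{-\tau l}$ in the regime $k\le\sigma l$; this does not follow from the $e^{-\tau\min\{k,l\}}$ bound directly but is deduced from $2$-fold decay applied to $\varphi_1\cdot\varphi_2\circ T^k$, at the cost of a factor $\lipnorm{T}^k$ which is why $\sigma$ must be taken smaller than $\tau/\log\lipnorm{T}$.
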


\begin{remark}
We make a number of remarks on the above propositions.
\begin{enumerate}
  \item Note that each proposition implies that if
    $\sum_{k=1}^\infty M_k < \infty$, then for almost every
    $x$, there are only finitely many $n$ for which $T^n (x) \in
    B(x,r_k(x))$. The propositions therefore provide a dichotomy of
    finitely--infinitely many returns, under the extra assumption
    that $M_k$ is decreasing and that $M_k \leq C (\log k)^2$.
  \item Our application to higher dimensional systems is limited since
    the proof of our main result, Theorem~\ref{thm:dynamical}, makes
    use of a short return time condition that is difficult to verify
    for higher dimensional non-linear systems.
  \item The conclusion of Proposition~\ref{prop:interval} holds true
    for the Gauss map even though the collection of intervals is
    infinite. We exclude the proof for sake of simplicity.
  \item We would like to contrast the condition
    $M_k \leq C (\log k)^{-2}$ in Proposition~\ref{prop:interval} with
    the condition
    $\limsup_{n \to \infty} \sum_{k=c\log n}^{n} M_k = \infty$
    in Theorem~B \cite{KKP} in which the authors prove a RBC lemma.
  \item Note that Theorem~D \cite{KKP} establishes an RBC with an exact
    $0$--$1$ dichotomy for hyperbolic toral automorphisms.
    Proposition~\ref{prop:torus} strengthens the conclusion when
    $\sum_{k=1}^{\infty}M_k = \infty$ under the assumption that
    $M_k \leq C (\log k)^{-2}$. One expects that the stronger
    conclusion holds without this additional condition on $(M_k)$.
\end{enumerate}
\end{remark}

We conclude the introduction by mentioning an application of strong
Borel--Cantelli lemmas. In 2007, Galatolo and Kim \cite{GK} established
a relationship between hitting times and the pointwise dimension of the
measure for systems satisfying a strong Borel--Cantelli lemma for the
shrinking target problem.
The proof can, with straight-forward modifications, be applied to
systems for which an RSBC lemma holds (see the corollary in
\cite{Persson} for a simplified proof). In this case, one instead
establishes a relationship between return times and the pointwise
dimension of the measure. In combination with the Theorem~1 of Barreira
and Saussol \cite{BS}, it directly implies an equality between the
lower (respectively upper) recurrence rate and the lower (respectively
upper) pointwise dimension at a point $x$ (see \cite[\S2.3]{ARS}).

\subsection{Paper structure}

We begin by describing the setting and listing the assumptions on the
systems under consideration in Section~\ref{sec:results}.
After which, we list the main results, namely Theorem~\ref{thm:general}
and Theorem~\ref{thm:dynamical}. Theorem~\ref{thm:general} is a general
result for arbitrary sets $E_k$ satisfying several measure estimates.
Theorem~\ref{thm:dynamical}, in which we restrict to
$E_k = \{x \in X : T^{k}x \in B(x,r_k(x))\}$, is a result for the
dynamical systems under consideration.
We then state some lemmas that aid in proving the main results,
including an improvement to existing correlation estimates
(Lemma~\ref{lem:largel}).
The main results are proven in Section~\ref{sec:mainproofs}.
In Section~\ref{sec:largelproof}, we prove Lemma~\ref{lem:largel} using
a particular partition of the space, which is described in
Lemma~\ref{lem:partition}.
In Section~\ref{sec:applications}, we apply Theorem~\ref{thm:dynamical}
and prove Proposition~\ref{prop:interval} and
Proposition~\ref{prop:torus}. In both cases we establish that the set
of points with short return times has small measure.

\section{Results}
\label{sec:results}

\subsection{Setting and notation}

From now on, we assume that $(X,T,\mu,d)$ is a metric
measure-preserving system, i.e., $(X,d)$ is a metric space,
$T \colon X \to X$ is a measurable transformation and $\mu$ is a Borel
probability measure for which $\mu(T^{-1} A) = \mu(A)$ for all
$\mu$-measurable sets $A$. We restrict $X$ to be a compact Riemannian
manifold (although this can be relaxed to metric spaces with
appropriate assumptions) and denote the Lebesgue measure on $X$ by
$\leb$.
Let $(M_k)$ denote a sequence in $[0,1]$ and define -- when
possible -- the open balls $B_k(x) = B(x,r_k(x))$ around $x$ by
$\mu(B_k(x)) = M_k$.
Let $\overline{P}$, $\# P$ and $|P|$ denote the closure, cardinality
and diameter, respectively, of a set $P$. For a collection of sets
$\pcal$, we define its diameter as $|\pcal| = \sup_{P \in \pcal} |P|$.
For a Lipschitz continuous function $f \colon X \to Y$ between metric
spaces, we define
\[
  [f]_1 = \sup_{x \neq y} \frac{d(f(x),f(y))}{d(x,y)}
\]
and
\[
  \lipnorm{f} = \supnorm{f} + [f]_1.
\]
For a bounded function $f \colon [0,1] \to \reals$, we write
\[
  \bvnorm{f} = \supnorm{f} + \var f,
\]
where $\var f$ is the total variation of $f$, and denote the space of
functions with $\bvnorm{f} < \infty$ by $BV$.

We will occasionally write $a \geqs b$ for expressions $a$ and $b$ if
there exists a constant $c > 0$, depending only on $(X,T,\mu,d)$, such
that $a \geq c b$.
Lastly, for sake of notational ease, $C$ will denote a multiplicative
constant that depends only on $(X,T,\mu,d)$ and may denote different
constants from equation to equation.

Note that it is not possible to define the balls $B_k(x)$ and radii
$r_k(x)$ for all sequences $(M_k)$. We will soon restrict our systems
and sequences enough so that it is possible to define $B_k(x)$ for
$\mu$-a.e.\ $x$. When it is possible, we define
\begin{equation}
  \label{eq:defn:radius}
  r_k(x) = \inf\{r \geq 0 : \mu(B(x,r)) \geq M_k\}.
\end{equation}

We now state our assumptions about the underlying systems that we 
consider, the first of which being a multiple mixing property.

\begin{assumption}[$3$-fold decorrelation]
  \label{assumption:doc}
  There exists constants $C, \tau > 0$ and $\sigma \in (0,1]$ such that
  for all non-negative Lipschitz continuous functions
  $\varphi_1, \varphi_2, \varphi_3 \colon X \to [0,\infty)$,
  \begin{multline}
    \label{eq:doc:total}
    \biggl| \int_{X} \varphi_1 \varphi_2 \circ T^{k} \varphi_3 \circ
    T^{k+l} \diff\mu - \int_{X} \varphi_1 \diff\mu \int_{X} \varphi_2
    \diff\mu \int_{X} \varphi_3 \diff\mu \biggr| \\
    \leq C \lipnorm{\varphi_1} \lipnorm{\varphi_2} \lipnorm{\varphi_3}
    e^{-\tau \min \{k,l\}}
  \end{multline}
  for all integers $k,l > 0$, and
  \begin{multline}
    \label{eq:doc:partial}
    \biggl| \int_{X} \varphi_1 \varphi_2 \circ T^{k} \varphi_3 \circ T^{k+l}
    \diff\mu - \int_{X} \varphi_1 \varphi_2 \circ T^{k} \diff\mu
    \int_{X} \varphi_3 \diff\mu \biggr| \\
    \leq C \lipnorm{\varphi_1} \lipnorm{\varphi_2} \lipnorm{\varphi_3}
    e^{-\tau l}
  \end{multline}
  for all $k \leq \sigma l$.
\end{assumption}

\begin{assumption}[Short return times]
  \label{assumption:srt}
  Let $E_k = \{x \in X : T^{k}x \in B(x,r_k(x))\}$. There exists
  constants $C, \delta, N > 0$ and a summable function $\psi \colon
  \naturals \to [0,\infty)$ such that
  \begin{equation}
    \mu (E_k \cap E_{k+l}) \leq C \mu (E_k)^{1+\delta} (\log k)^{N}
    + \mu(E_k) \psi(l).
  \end{equation}
\end{assumption}

\begin{assumption}[Frostman property]
  \label{assumption:frostman}
  There exists $C,s > 0$ such that for all $x \in X$ and $r$
  sufficiently small,
  \[
    \mu\bigl( B(x, r) \bigr) \leq C r^{s}.
  \]
\end{assumption}

\begin{assumption}[Thin annuli]
  \label{assumption:thinAnnuli}
  There exists $C,\alpha_0,r_0 > 0$ such that for all
  $\varepsilon \leq r \leq r_0$ and all $x\in X$,
  \[
    \mu\bigl( B(x, r + \varepsilon) \setminus B(x, r) \bigr)
    \leq C\varepsilon^{\alpha_0}.
  \]
\end{assumption}

\begin{remark}[Multiple mixing]
  Assumption~\ref{assumption:doc} is a weaker form of a multiple mixing
  property as stated, for example, in \cite[Theorem~7.41]{CM}, which
  holds for uniformly hyperbolic billiards.
  Kotani and Sunada \cite{KS} proved that \eqref{eq:doc:total} holds
  for topologically transitive Anosov diffeomorphisms, and it can be
  proven (see Section~\ref{sec:anosov}) in this case that
  \eqref{eq:doc:total} implies \eqref{eq:doc:partial}.
  In particular, Assumption~\ref{assumption:doc} holds for the maps
  considered in Proposition~\ref{prop:torus}.
  A similar property to \eqref{eq:doc:total} is proven by Dolgopyat
  \cite[Theorem~2]{Dolgopyat} for maps that are `strongly u-transitive
  with exponential rate.' Dolgopyat gives several examples of partially
  hyperbolic systems that satisfy such a property.
  See also property $(\pcal_r)$ in P\`ene \cite{Pene} for yet another
  similar property.

  We also point out that a stronger property than
  Assumption~\ref{assumption:doc} holds in the setting of
  Proposition~\ref{prop:interval}, that is, for some interval maps
  preserving a Gibbs measure.
  In that setting, it is established in \cite{LSV} that correlations
  decay exponentially for $L^{1}$ versus $BV$ observables:
  \[
    \biggl| \int_{X} \varphi_1 \varphi_2 \circ T^{k} \diff\mu
    - \int_{X} \varphi_1 \diff\mu \int_{X} \varphi_2 \diff\mu \biggr|
    \leq C \bvnorm{\varphi_1} \lnorm{\varphi_2}{1} e^{-\tau n}.
  \]
  This implies \eqref{eq:doc:total} for $\varphi_1, \varphi_2 \in BV$ 
  and $\varphi_3 \in L^{1}$, which implies
  Assumption~\ref{assumption:doc} once we restrict to Lipschitz
  continuous observables (see Section~\ref{sec:intervalmaps}).
\end{remark}

\begin{remark}
  Assumption~\ref{assumption:srt} can be interpreted as a short return
  time condition. It is difficult to verify for higher dimensional
  hyperbolic systems, although it appears to be a reasonable assumption
  for sufficiently chaotic systems. As we will see, this assumption
  holds for linear Anosov maps on the Torus with Lebesgue measure, and
  non-linear interval maps with Gibbs measure.

  Assumption~\ref{assumption:frostman} is a standard assumption and
  a large class of measures satisfy it.

  Assumption~\ref{assumption:thinAnnuli} is a thin annuli assumption
  and holds for various systems.
  For example, it employed in \cite{GHN} and holds for a broad class of
  Lozi maps.
  It is also used in \cite{HNPV} and holds for dispersing billiard
  systems, compact group extensions of Anosov systems, a class of Lozi
  maps and one-dimensional non-uniformly expanding interval maps
  preserving an absolutely continuous measure with density in
  $L^{p}$ for some $p > 1$.
  In \cite{ARS} it is proved that if $X$ is a smooth Riemannian
  manifold and Assumption~\ref{assumption:frostman} is satisfied for
  $s > \dim X - 1$, then Assumption~\ref{assumption:thinAnnuli} holds.
  In particular, Assumption~\ref{assumption:thinAnnuli} holds when
  $\diff\mu = h \diff\leb$ with $h \in L^{p}$ for some $p > 1$.
\end{remark}

By \cite[Lemma~3.1]{ARS}, the compactness of $X$ and
Assumptions~\ref{assumption:frostman} and \ref{assumption:thinAnnuli}
imply that $r_k$ in \eqref{eq:defn:radius} is well defined on
$\supp \mu$ (and hence a.e.)\ and is Lipschitz continuous with
$[r_k]_1 = 1$.

\subsection{Main results}

We begin by stating a general theorem for an arbitrary sequence of sets
$(E_k)$ satisfying several measure estimates.

\begin{theorem}[General Theorem]
  \label{thm:general}
  Suppose that the sets $E_k$ satisfy the following measure estimates.
  There exists constants $C,N,\delta,\tau > 0$ and $\sigma \in (0,1]$
  such that,
  \begin{align}
    \label{eq:nottoobig}
    \mu (E_k) & \leq C/(\log k)^{(2+N)/\delta} & k > 0, \\
    \label{eq:mixing}
    \mu (E_k \cap E_{k+l}) & \leq \mu (E_k) \mu (E_{k+l}) + C
                             (e^{-\tau k} + e^{- \tau l})
                             & k,l > 0, \\
    \label{eq:quasiindependence}
    \mu (E_k \cap E_{k+l}) &\leq C \mu (E_k)^{1+\delta} (\log k)^{N}
    + \mu(E_k) \Psi(l) & l \leq (\log k)^2, \\
    \label{eq:largel}
    \mu(E_k \cap E_{k+l}) &\leq \mu(E_k) \mu(E_{k+l})
    + C\mu(E_{k+l})e^{-\tau k} + Ce^{-\tau l} & k \leq \sigma l.
  \end{align}
  Then
  \[
    \sum_{k=1}^{n} \charfun_{E_k} = \Phi(n) + \bigo \bigl(
    \Phi(n)^{1/2} (\log \Phi(n))^{3/2 + \varepsilon} \bigr)
  \]
  $\mu$-almost everywhere, where $\Phi(n) = \sum_{k=1}^{n} \mu(E_k)$.
\end{theorem}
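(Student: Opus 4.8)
The plan is to prove this via a variance estimate combined with a Borel--Cantelli / Gál--Koksma type argument. Set $S_n = \sum_{k=1}^n \charfun_{E_k}$ and $\Phi(n) = \sum_{k=1}^n \mu(E_k)$, and consider the centered sums $S_n - \Phi(n) = \sum_{k=1}^n (\charfun_{E_k} - \mu(E_k))$. The key quantity is the $L^2$ norm of $S_n - \Phi(n)$, which expands as $\sum_{k,j} (\mu(E_k \cap E_j) - \mu(E_k)\mu(E_j))$. First I would bound the diagonal terms $k = j$, which contribute $\sum_{k=1}^n \mu(E_k)(1 - \mu(E_k)) \leq \Phi(n)$. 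For the off-diagonal terms with $l = j - k > 0$, I split into three ranges according to which hypothesis applies. When $l \leq (\log k)^2$ I use quasi-independence \eqref{eq:quasiindependence}: the term $C\mu(E_k)^{1+\delta}(\log k)^N$ summed over such $l$ contributes at most $C \mu(E_k)^{1+\delta}(\log k)^{N+2}$, and by \eqref{eq:nottoobig} we have $(\log k)^{N+2}\mu(E_k)^\delta \leq C$, so this is $O(\mu(E_k))$ per $k$, hence $O(\Phi(n))$ total; the term $\mu(E_k)\Psi(l)$ sums to $O(\mu(E_k))$ since $\Psi$ is summable. When $(\log k)^2 < l$ and $k \leq \sigma l$ (the ``large $l$'' range), I use \eqref{eq:largel}: the covariance is bounded by $C\mu(E_{k+l})e^{-\tau k} + Ce^{-\tau l}$, and the double sum of $e^{-\tau l}$ is $O(1)$ while $\sum_k e^{-\tau k}\sum_l \mu(E_{k+l}) = O(\Phi(n))$. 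The intermediate range, where $l > (\log k)^2$ but $k > \sigma l$ (equivalently $l < k/\sigma$, so both are comparably large and $l \gtrsim (\log k)^2$ forces $k$ large too), is handled by \eqref{eq:mixing}: here $e^{-\tau k} + e^{-\tau l} \leq 2e^{-\tau (\log k)^2/\sigma'}$ roughly, which is summable in both indices, contributing $O(1)$. Altogether this gives $\| S_n - \Phi(n)\|_{L^2}^2 \leq C\,\Phi(n) + C$, i.e., the variance grows at most linearly in $\Phi(n)$.

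Next I would upgrade the $L^2$ bound to an almost-everywhere asymptotic using a standard Gál--Koksma / Sprindžuk maximal-inequality argument (as in, e.g., Schmidt's or Philipp's treatment, or the version used in \cite{Persson, ARS}). The point is: if one has $\|\sum_{k=m+1}^{n}(\charfun_{E_k} - \mu(E_k))\|_{L^2}^2 \leq C(\Phi(n) - \Phi(m)) + C$ uniformly, then for $\mu$-a.e.\ $x$, $\sum_{k=1}^n \charfun_{E_k}(x) = \Phi(n) + O\bigl(\Phi(n)^{1/2}(\log \Phi(n))^{3/2 + \varepsilon}\bigr)$. This follows by the usual dyadic-blocking scheme: along a subsequence $n_j$ with $\Phi(n_j) \asymp 2^j$ (or $\asymp j^{1+\eta}$ for a finer threshold), Chebyshev plus Borel--Cantelli controls $S_{n_j} - \Phi(n_j)$, and a maximal inequality (Rademacher--Menshov type, which produces exactly the logarithmic powers $(\log \Phi(n))^{3/2+\varepsilon}$) controls the fluctuation within each block, using the subadditivity of the variance bound across sub-blocks. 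I would verify that the same variance estimate holds for partial sums starting at any $m$, which is immediate since hypotheses \eqref{eq:mixing}--\eqref{eq:largel} are stated for all $k, l$.

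The main obstacle, I expect, is the careful bookkeeping in the intermediate range of $l$ and making sure the three hypotheses genuinely cover all pairs $(k,l)$ with no gap: the ranges are $l \leq (\log k)^2$, $(\log k)^2 < l$ with $k > \sigma l$, and $k \leq \sigma l$, and one must check that in the middle range the exponential gains in \eqref{eq:mixing} are strong enough — this uses that $l > (\log k)^2$ together with $k > \sigma l$ forces both $k$ and $l$ to be large, so $e^{-\tau k} + e^{-\tau l}$ decays faster than any power and the double sum converges. The second potential subtlety is tracking the exact exponent on the logarithm through the Rademacher--Menshov maximal inequality to land on $3/2 + \varepsilon$ rather than something weaker; this is where the hypothesis \eqref{eq:nottoobig} with the precise exponent $(2+N)/\delta$ is calibrated, and one should follow the bookkeeping in \cite{ARS} closely. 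The rest — the diagonal bound, the summability of $\Psi$, and the a.e.\ upgrade — is routine.
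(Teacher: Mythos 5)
Your decomposition is exactly the paper's: the same variance expansion, the same three ranges $l \leq (\log k)^2$, $(\log k)^2 < l < k/\sigma$, $k \leq \sigma l$, the same hypothesis used in each range, and the same Gál--Koksma endgame. There is, however, one genuine gap in how you close the argument: your variance bound has the form $C(\Phi(n)-\Phi(m)) + C$, with additive $O(1)$ contributions coming from the middle range and from the $\sum_{k,l} e^{-\tau l}$ piece of the large-$l$ range. The Gál--Koksma lemma requires a bound of the form $C\sum_{k=m}^{n}\phi_k$ with $\phi_k \geq \mu(E_k)$, uniformly over \emph{all} windows $[m,n]$, and the additive constant is not harmless there: in the dyadic/Rademacher--Menshov blocking one sums the variance bound over roughly $n$ sub-blocks, so a ``$+C$'' per block accumulates to an error of order $n^{1/2}(\log n)^{3/2+\varepsilon}$ rather than $\Phi(n)^{1/2}(\log\Phi(n))^{3/2+\varepsilon}$. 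Since \eqref{eq:nottoobig} is only an upper bound, $\Phi(n)$ may grow far more slowly than $n$ (it need only diverge), so this is a real loss, not a cosmetic one. Your remark that ``the same variance estimate holds for partial sums starting at any $m$'' is true of the $C(\Phi(n)-\Phi(m))$ part but precisely fails for the constant.

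The fix the paper uses, and which you need, is to distribute that constant over the indices before invoking Gál--Koksma: assume without loss of generality that $\mu(E_k) \geq C k e^{-\tau(\log k)^2}$ (equivalently, apply Gál--Koksma with $\phi_k = \mu(E_k) + Cke^{-\tau(\log k)^2}$; the added terms are summable, so $\Phi$ changes only by $O(1)$ and the asserted error term is unaffected). With this normalisation every error term is absorbed \emph{per index} $k$: in the middle range $\sum_{l=(\log k)^2}^{k/\sigma}(e^{-\tau k}+e^{-\tau l}) \leq C(ke^{-\tau k}+e^{-\tau(\log k)^2}) \leq C\mu(E_k)$, and in the large-$l$ range $\sum_{l\geq k/\sigma} e^{-\tau l} \leq Ce^{-\tau k} \leq C\mu(E_k)$, so the whole bound becomes $S_{m,n}\leq C\sum_{k=m}^{n}\mu(E_k)$ in the exact form the lemma needs. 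Everything else in your outline (the diagonal, the short-$l$ range via \eqref{eq:quasiindependence} and \eqref{eq:nottoobig}, the coverage of the three ranges, and the trivial case $\sum_k\mu(E_k)<\infty$, which you should also dispose of at the outset since $\log\Phi(n)$ is otherwise meaningless) is correct and matches the paper.
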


We will use Theorem~\ref{thm:general} with
$E_k = \{x \in X : T^{k}x \in B(x,r_k(x))\}$ to prove the following
result. Recall that we assume that $X$ is a compact Riemannian
manifold.

\begin{theorem}
  \label{thm:dynamical}
  Suppose that $(X,T,\mu,d)$ is an m.m.p.s.\ satisfying
  Assumptions~\ref{assumption:doc}--\ref{assumption:thinAnnuli} and
  that $(M_k)$ is a sequence such that
  $M_k \leq C(\log k)^{-(2+N)/\delta}$ for some $C > 0$ and the
  $N,\delta > 0$ appearing in Assumption~\ref{assumption:srt}. Then
  \[
    \sum_{k=1}^{n} \charfun_{B(x,r_k(x))} (T^k x) = \Phi(n) +
    \bigo \bigl( \Phi(n)^{1/2} (\log \Phi(n))^{3/2 + \varepsilon}
    \bigr)
  \]
  for $\mu$-a.e.\ $x \in X$, where
  $\Phi(n) = \sum_{k=1}^{n} \mu\bigl(B(x,r_k(x))\bigr)$.
\end{theorem}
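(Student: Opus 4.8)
The plan is to deduce Theorem~\ref{thm:dynamical} from the general result, Theorem~\ref{thm:general}, by taking $E_k = \{x \in X : T^k x \in B(x,r_k(x))\}$ and verifying the four measure estimates \eqref{eq:nottoobig}--\eqref{eq:largel}. First I would record the basic facts: by \cite[Lemma~3.1]{ARS} together with Assumptions~\ref{assumption:frostman} and \ref{assumption:thinAnnuli}, the radius $r_k$ is well defined a.e.\ and Lipschitz with $[r_k]_1 = 1$, and $\mu(E_k) = M_k$ up to a controlled error coming from the thin-annuli estimate (the set $E_k$ is essentially the preimage structure $\{(x,T^kx) : d(x,T^kx) < r_k(x)\}$, and one approximates the indicator of this ``diagonal tube'' by Lipschitz functions). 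The estimate \eqref{eq:nottoobig} is then immediate from the hypothesis $M_k \leq C(\log k)^{-(2+N)/\delta}$, since $\mu(E_k) \asymp M_k$.

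Next I would establish the two genuinely dynamical estimates, \eqref{eq:mixing} and \eqref{eq:largel}, using Assumption~\ref{assumption:doc}. The standard device is to write $\charfun_{E_k}(x)$ as (close to) a product $\varphi_1(x)\,\varphi_2(T^k x)$ where the $\varphi_i$ are Lipschitz bump functions localizing the pair $(x, T^k x)$ near the diagonal; more precisely one covers $X$ by small balls $\{U_j\}$ and writes $E_k$ as a union over $j$ of events like $\{x \in U_j,\ T^k x \in \tilde U_j\}$ with $\tilde U_j$ a slightly enlarged ball, then uses Lipschitz partitions of unity. Applying \eqref{eq:doc:total} to the triple product arising from $\charfun_{E_k \cap E_{k+l}}$ (localizing at times $0$, $k$, $k+l$) gives $\mu(E_k \cap E_{k+l}) \approx \mu(E_k)\mu(E_{k+l})$ with error $Ce^{-\tau\min\{k,l\}} \leq C(e^{-\tau k} + e^{-\tau l})$ — but here one must be careful, because the Lipschitz norms of the bump functions blow up polynomially as the balls shrink (like $(\text{radius})^{-1}$ in each factor), so the error is really $C r_k^{-a} r_{k+l}^{-b} e^{-\tau \min\{k,l\}}$; one absorbs this by noting $r_k$ is bounded below by a negative power of $k$ (Frostman) and trading a bit of the exponential decay, yielding \eqref{eq:mixing}. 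For \eqref{eq:largel}, in the regime $k \leq \sigma l$ one uses \eqref{eq:doc:partial} instead, which keeps the first two factors $\varphi_1\,\varphi_2 \circ T^k$ together; this produces the sharper term $C\mu(E_{k+l})e^{-\tau k} + Ce^{-\tau l}$ because the factor $\mu(E_{k+l})$ survives from $\int \varphi_3 \diff\mu$. This is the point where the improved correlation estimate (Lemma~\ref{lem:largel}) referenced in the introduction does the work, allowing the $\mu(E_{k+l})$ factor to be extracted cleanly rather than bounding it by $1$.

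The remaining estimate \eqref{eq:quasiindependence} is exactly Assumption~\ref{assumption:srt} (the short return times condition), with $\Psi = \psi$ and the range restricted to $l \leq (\log k)^2$ — so nothing is needed there beyond quoting the assumption and checking $\mu(E_k) \asymp M_k$ once more so that the $\mu(E_k)^{1+\delta}$ factor is the right one. Once all four hypotheses of Theorem~\ref{thm:general} are verified with a common set of constants, the conclusion of Theorem~\ref{thm:general} applied to these $E_k$ gives $\sum_{k=1}^n \charfun_{E_k} = \Phi(n) + O(\Phi(n)^{1/2}(\log\Phi(n))^{3/2+\varepsilon})$, and since $\charfun_{E_k}(x) = \charfun_{B(x,r_k(x))}(T^k x)$ and $\Phi(n) = \sum_k \mu(E_k) = \sum_k M_k = \sum_k \mu(B(x,r_k(x)))$, this is precisely the claimed statement.

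The main obstacle I anticipate is the bookkeeping in the two decorrelation estimates: controlling how the polynomially-growing Lipschitz norms of the localizing bump functions interact with the exponential mixing rate, and doing so uniformly in $x$ so that the resulting bounds hold with constants depending only on $(X,T,\mu,d)$. A secondary technical point is handling the set $E_k$ near the diagonal — approximating $\charfun_{E_k}$ by products of Lipschitz functions requires the thin-annuli Assumption~\ref{assumption:thinAnnuli} to ensure the approximation error $\mu(B(x,r_k+\varepsilon)\setminus B(x,r_k)) \lesssim \varepsilon^{\alpha_0}$ is negligible after summing, and one must choose the localization scale $\varepsilon = \varepsilon(k)$ carefully (polynomially small in $k$) so that both the approximation error and the amplified mixing error remain summable in the relevant sense. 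These are the places where Assumptions~\ref{assumption:frostman} and \ref{assumption:thinAnnuli} are used in an essential, quantitative way.
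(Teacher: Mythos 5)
Your proposal is correct and follows essentially the same route as the paper: deduce the theorem from Theorem~\ref{thm:general} by verifying \eqref{eq:nottoobig} from the hypothesis on $(M_k)$ together with $\mu(E_k)=M_k+\bigo(e^{-\eta_0 k})$, \eqref{eq:mixing} and \eqref{eq:largel} from Lemmas~\ref{lem:estimates:general} and \ref{lem:largel} (whose proofs use exactly the Lipschitz localization and decorrelation bookkeeping you sketch), and \eqref{eq:quasiindependence} from Assumption~\ref{assumption:srt}. The only minor point you leave implicit is the harmless normalization $M_k \geq Cke^{-\tau(\log k)^2}$ used to pass between $\Psi(n)=\sum\mu(E_k)$ and $\Phi(n)=\sum M_k$ in the error term.
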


\subsection{Lemmas}

A central result that we use to establish Theorem~\ref{thm:general} is
the following result originally due to G\'al and Koksma
\cite{GalKoksma} and subsequently generalised in \cite{Harman} and
\cite[Lemma~10]{VGS}).

\begin{lemma}[G\'al--Koksma]
  \label{lem:galkoksma}
  Suppose that $\mu(E_k) \leq \phi_k$ and that
  \[
    \int \biggl( \sum_{k=m}^n \mathbbm{1}_{E_k} - \mu (E_k)
    \biggr)^2 \diff\mu \leq C \sum_{k=m}^n \phi_k.
  \]
  Let
  \[
    \Phi (n) = \sum_{k=1}^n \phi_k.
  \]
  Then for all $\varepsilon > 0$,
  \[
    \sum_{k=1}^n \mathbbm{1}_{E_k} = \sum_{k=1}^n \mu (E_k) + \bigo
    (\Phi (n)^{1/2} (\log \Phi (n))^{3/2 + \varepsilon}),
  \]
  $\mu$-almost everywhere.
\end{lemma}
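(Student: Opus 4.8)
The plan is to reduce the statement to the classical Rademacher--Menshov (G\'al--Koksma) maximal inequality, the one nontrivial point being to arrange that the logarithmic loss in that inequality comes out as $\log\Phi(n)$ rather than as $\log n$. Write $g_k=\mathbbm{1}_{E_k}-\mu(E_k)$ and $T_n=\sum_{k=1}^n g_k$, so the claim is that $T_n=\bigo(\Phi(n)^{1/2}(\log\Phi(n))^{3/2+\varepsilon})$ a.e. We may assume $\Phi(n)\to\infty$: otherwise $\sum_k\mu(E_k)\le\sum_k\phi_k<\infty$, the easy half of Borel--Cantelli makes $\sum_k\mathbbm{1}_{E_k}$ finite a.e., and both sums in the conclusion are eventually constant. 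As is the case in the applications, we may also assume $\phi_k\le 1$.

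First I would regroup the indices into consecutive blocks of bounded $\phi$-mass. Put $N_0=0$, let $N_j$ be the least $N$ with $\Phi(N)\ge j$, set $\mathcal{B}_j=\{N_{j-1}+1,\dots,N_j\}$ and $w_j=\sum_{k\in\mathcal{B}_j}\phi_k$, so that $w_j\le 2$, and let $\mathcal{J}=\mathcal{J}(n)=\lfloor\Phi(n)\rfloor$, which satisfies $N_{\mathcal{J}}\le n<N_{\mathcal{J}+1}$ and $\mathcal{J}\asymp\Phi(n)$. With $Y_j=T_{N_j}-T_{N_{j-1}}=\sum_{k\in\mathcal{B}_j}g_k$, the hypothesis applied to the index interval $(N_a,N_b]$ gives $\int(\sum_{j=a+1}^b Y_j)^2\diff\mu\le C\sum_{j=a+1}^b w_j\le 2C(b-a)$ for all $a<b$, so $(Y_j)$ satisfies a variance bound with unit weights. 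The Rademacher--Menshov maximal inequality then yields, for every $\mathcal{J}$,
\[
  \int\max_{1\le J\le\mathcal{J}}\Bigl(\sum_{j=1}^J Y_j\Bigr)^2\diff\mu
  \le C\,\mathcal{J}\,(\log\mathcal{J})^2 .
\]
Since $\mathcal{J}\asymp\Phi(n)$, the factor $(\log\mathcal{J})^2$ here is $\asymp(\log\Phi(n))^2$; this reindexing by unit-mass blocks is precisely what converts the $\log n$ of the raw inequality into $\log\Phi(n)$.

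Next I would apply Chebyshev and Borel--Cantelli along a geometric subsequence. For $i\ge 1$, the displayed bound with $\mathcal{J}=2^i$ gives
\[
  \mu\Bigl(\max_{J\le 2^i}\Bigl|\sum_{j=1}^J Y_j\Bigr|>2^{i/2}i^{3/2+\varepsilon}\Bigr)
  \le\frac{C\,2^i(\log 2^i)^2}{2^i i^{3+2\varepsilon}}\le\frac{C}{i^{1+2\varepsilon}},
\]
which is summable, so for a.e.\ $x$ there is $i_0(x)$ with $\max_{J\le 2^i}\lvert\sum_{j\le J}Y_j(x)\rvert\le 2^{i/2}i^{3/2+\varepsilon}$ for all $i\ge i_0(x)$. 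Now fix such an $x$ and a large $n$, set $\mathcal{J}=\mathcal{J}(n)$, and let $i$ be least with $2^i>\mathcal{J}$, so $2^i\asymp\Phi(n)$ and $i\asymp\log\Phi(n)$. Decompose $T_n=\sum_{j\le\mathcal{J}}Y_j+(T_n-T_{N_{\mathcal{J}}})$; the first term is at most $\max_{J\le 2^i}\lvert\sum_{j\le J}Y_j\rvert\le 2^{i/2}i^{3/2+\varepsilon}\lesssim\Phi(n)^{1/2}(\log\Phi(n))^{3/2+\varepsilon}$. For the second term, since $N_{\mathcal{J}}\le n<N_{\mathcal{J}+1}$ the sum $T_n-T_{N_{\mathcal{J}}}$ runs over a subset of the single block $\mathcal{B}_{\mathcal{J}+1}$, whence $\lvert T_n-T_{N_{\mathcal{J}}}\rvert\le\sum_{k\in\mathcal{B}_{\mathcal{J}+1}}\mathbbm{1}_{E_k}+w_{\mathcal{J}+1}\le\lvert Y_{\mathcal{J}+1}\rvert+2w_{\mathcal{J}+1}\le\lvert Y_{\mathcal{J}+1}\rvert+4$, and $\lvert Y_{\mathcal{J}+1}\rvert\le 2\max_{J\le 2^i}\lvert\sum_{j\le J}Y_j\rvert$ is controlled by the same estimate (here $\mathcal{J}+1\le 2^i$). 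Combining the two bounds, and finally running the whole argument with $\varepsilon/2$ in place of $\varepsilon$ to absorb the implicit constants and $\log 2$ factors, gives $T_n=\bigo(\Phi(n)^{1/2}(\log\Phi(n))^{3/2+\varepsilon})$ for a.e.\ $x$.

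The main obstacle is the first step: the Rademacher--Menshov inequality only sees the number of summands, so one must engineer the unit-mass block decomposition so that after reindexing only $\asymp\Phi(n)$ "terms" remain; once that is done, the remainder is a routine second-moment and Borel--Cantelli argument, together with the elementary observation that a partial sum over a single block differs from the corresponding $Y_j$ by at most that block's (bounded) total $\phi$-mass.
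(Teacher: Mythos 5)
The paper does not prove this lemma; it is quoted from G\'al--Koksma, Harman and Sprind\v{z}uk (Lemma~10 of the latter), so there is no in-paper argument to compare against. Your proof is, in essence, the standard proof from those sources: regroup the indices into consecutive blocks of unit $\phi$-mass so that the Rademacher--Menshov dyadic maximal inequality produces a loss of $(\log \Phi(n))^{2}$ rather than $(\log n)^{2}$, then conclude by Chebyshev and Borel--Cantelli along the subsequence $\mathcal{J}=2^{i}$, controlling the leftover partial block by its bounded mass; the details check out. The one point worth flagging is that your blanket assumption $\phi_k\le 1$ is not implied by the statement as printed and is genuinely used (it is what gives $w_j\le 2$, hence the unit-weight variance bound for the $Y_j$ and the $O(1)$ remainder): capping $\phi_k$ at $1$ need not preserve the second-moment hypothesis, so it is not literally ``without loss of generality.'' This boundedness hypothesis does appear in the classical formulations and is harmless here, since the lemma is applied with $\phi_k=\mu(E_k)\le 1$, but strictly speaking your argument establishes the lemma only under that extra condition.
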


The following lemma was established in \cite[Proposition~4.1]{ARS} and
extends the estimates in \cite[Lemma~4.1, Lemma~4.2]{KKP} to
higher dimensional systems but with the additional assumption that
$M_k \to 0$. The estimates in \cite{KKP} hold for interval maps with
exponential decay of correlations for $L^{1}$ versus $BV$ observables
and measures satisfying Assumption~\ref{assumption:frostman} without
any restrictions on the sequence $(M_k)$.
\begin{lemma}
  \label{lem:estimates:general}
  Let $E_k = \{x \in X : T^{k}x \in B(x,r_k(x))\}$ and suppose that
  Assumptions~\ref{assumption:doc}, \ref{assumption:frostman} and
  \ref{assumption:thinAnnuli} hold.
  Suppose further that $(M_k)$ converges to $0$. Then there exists
  $C, \eta_0 > 0$ such that for all $k,l \in \naturals$,
  \begin{equation}
    \label{eq:Ekmeasure:general}
    | \mu(E_k) - M_k | \leq C e^{-\eta_0 k}
  \end{equation}
  and
  \begin{equation}
    \label{eq:correlation:old}
    \mu(E_k \cap E_{k+l}) \leq \mu(E_k) \mu(E_{k+l}) + Ce^{-\eta_0 k}
    + Ce^{-\eta_0 l}.
  \end{equation}
\end{lemma}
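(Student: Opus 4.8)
The obstacle throughout is that the target $B(x,r_k(x))$ in the definition of $E_k$ depends on the base point $x$, so $\charfun_{E_k}(x)=\charfun_{\{d(x,T^kx)<r_k(x)\}}$ is not of the product form to which Assumption~\ref{assumption:doc} applies. The plan is to freeze the target on the cells of a carefully chosen partition and then mollify. Concretely, I would fix two scales $\rho$ and $\epsilon$, both to be taken exponentially small in $k$ (respectively in $\min\{k,l\}$), and build a partition $\pcal=\{P_j\}$ of $X$ by the greedy rule: choose a maximal $\rho$-separated set $\{z_j\}$ in $\supp\mu$ and put $P_j=B(z_j,2\rho)\setminus\bigcup_{i<j}P_i$. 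Then $|P_j|\le 4\rho$, $\#\pcal\lesssim\rho^{-\dim X}$, and the key point is that the boundary skeleton $\mathcal S=\bigcup_j\partial P_j$ lies inside the union of spheres $\bigcup_j\partial B(z_j,2\rho)$ (since $\partial(A\cap B)\subseteq\partial A\cup\partial B$ and $\partial(A^c)=\partial A$); hence, writing $A_\epsilon$ for the $\epsilon$-neighbourhood of $\mathcal S$, Assumption~\ref{assumption:thinAnnuli} gives $\mu(A_\epsilon)\le\#\pcal\cdot C\epsilon^{\alpha_0}\lesssim\rho^{-\dim X}\epsilon^{\alpha_0}$ — this is what a naive partition into cubes would not provide, since the $\epsilon$-neighbourhood of a union of cube faces is not small.

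Using that $r_k$ is $1$-Lipschitz (as recalled after Assumption~\ref{assumption:thinAnnuli} via \cite[Lemma~3.1]{ARS}), for $x\in P_j$ the triangle inequality gives the sandwich
\[
  \charfun_{P_j}(x)\,\charfun_{B(z_j,\,r_k(z_j)-4\rho)}(T^kx)
  \le \charfun_{P_j}(x)\,\charfun_{E_k}(x)
  \le \charfun_{P_j}(x)\,\charfun_{B(z_j,\,r_k(z_j)+4\rho)}(T^kx),
\]
so that summing over $j$ bounds $\charfun_{E_k}$ above and below by sums of honest products $\charfun_{P_j}\cdot(\charfun_{\text{fixed ball}}\circ T^k)$. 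I would then replace each $\charfun_{P_j}$ by a Lipschitz function agreeing with it off $A_\epsilon$ and each fixed-ball indicator by a Lipschitz function agreeing with it off an annulus of width $\lesssim\rho+\epsilon$, all with Lipschitz norm $\lesssim\epsilon^{-1}$; the approximation errors are controlled by $\mu(A_\epsilon)$ (after summing, using $\sum_j\charfun_{P_j}=1$ and bounded overlap of the mollified cells) and by $C(\rho+\epsilon)^{\alpha_0}$ from Assumption~\ref{assumption:thinAnnuli}, while $\mu(B(z_j,r_k(z_j)\pm4\rho))=M_k+\bigo(\rho^{\alpha_0})$. Applying the two-point form of Assumption~\ref{assumption:doc} (take $\varphi_3\equiv1$ and $l\ge k$ in \eqref{eq:doc:total}) to each of the $\#\pcal$ products gives
\[
  \mu(E_k)=M_k+\bigo\!\left(\rho^{\alpha_0}+\mu(A_\epsilon)
  +\rho^{-\dim X}\epsilon^{-2}e^{-\tau k}\right),
\]
and choosing $\epsilon=e^{-bk}$ with $b<\tau/3$ and then $\rho=e^{-ak}$ with $a<\min\{b\alpha_0/\dim X,\ (\tau-2b)/\dim X\}$ makes every error $\bigo(e^{-\eta_0 k})$ for some $\eta_0>0$; this is \eqref{eq:Ekmeasure:general}. (The hypothesis $M_k\to0$ enters only to guarantee $\sup_{x\in\supp\mu}r_k(x)\to0$, so that the radii involved eventually lie in the range where Assumption~\ref{assumption:thinAnnuli} holds.)

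For \eqref{eq:correlation:old} I would repeat the scheme with three factors: for $x\in P_j$,
\[
  \charfun_{P_j}(x)\,\charfun_{E_k\cap E_{k+l}}(x)
  \le \charfun_{P_j}(x)\,\charfun_{B(z_j,\,r_k(z_j)+4\rho)}(T^kx)\,
  \charfun_{B(z_j,\,r_{k+l}(z_j)+4\rho)}(T^{k+l}x),
\]
mollify all three indicators, apply \eqref{eq:doc:total} (now keeping the true $\min\{k,l\}$), and take $\rho,\epsilon$ exponentially small in $\min\{k,l\}$. This yields
\[
  \mu(E_k\cap E_{k+l})\le M_kM_{k+l}
  +\bigo\!\left(\rho^{\alpha_0}+\mu(A_\epsilon)
  +\rho^{-\dim X}\epsilon^{-3}e^{-\tau\min\{k,l\}}\right),
\]
and since \eqref{eq:Ekmeasure:general} already gives $M_kM_{k+l}=\mu(E_k)\mu(E_{k+l})+\bigo(e^{-\eta_0 k})$, the same balancing of scales (now with $3b<\tau$) turns the error into $\bigo(e^{-\eta_0 k}+e^{-\eta_0 l})$, which is \eqref{eq:correlation:old}.

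The hard part is the construction in the first paragraph — turning the moving target $B(x,r_k(x))$ into something to which a decorrelation estimate applies. The choice of a partition whose boundaries are spheres, so that Assumption~\ref{assumption:thinAnnuli} can absorb the cell-boundary errors, is the crux; after that, the work is the bookkeeping of the three scales $\rho$, $\epsilon$ and $e^{-\tau\min\{k,l\}}$ and checking they can be balanced so that all error terms decay exponentially, uniformly in $l$.
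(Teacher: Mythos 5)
The paper does not actually prove this lemma; it imports it from \cite[Proposition~4.1]{ARS}. Your argument follows the same overall strategy as that proof and as the paper's own proof of the companion estimate Lemma~\ref{lem:largel}: localise the base point $x$ to a cell of a fine partition, freeze the moving target $B(x,r_k(x))$ at a fixed ball around the cell's representative (using that $r_k$ is $1$-Lipschitz), mollify indicators into Lipschitz observables, apply Assumption~\ref{assumption:doc} with $\varphi_3\equiv 1$ (resp.\ with all three factors), and balance three exponential scales. Where you genuinely differ is the partition: you build cells from balls around a maximal $\rho$-separated net, so the boundary skeleton is a union of spheres and Assumption~\ref{assumption:thinAnnuli} directly bounds the measure of its $\epsilon$-neighbourhood, whereas the paper's Conze--Le~Borgne construction (Lemma~\ref{lem:partition}) pigeonholes cube faces of small measure and supplies ready-made densities $\rho_{P,n}$, needing no thin-annuli input for the boundary terms. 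Both work; yours is self-contained, the paper's is reusable where the sup and Lipschitz norms of the densities are also needed.

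One step of your bookkeeping must be done in the right order, and your write-up glosses over it. If you replace $\charfun_{B(z_j,r_k(z_j)\pm4\rho)}$ by a Lipschitz function and charge the replacement an $L^1$ error $C(\rho+\epsilon)^{\alpha_0}$ term by term, this error is incurred once per cell, and summing over the $\approx\rho^{-\dim X}$ cells destroys the estimate. The correct route is to keep an exact one-sided sandwich $\psi_j^-\le\charfun_{B(z_j,r_k(z_j))}\le\psi_j^+$ with Lipschitz $\psi_j^{\pm}$, apply the decorrelation estimate to the product \emph{first}, and only then use $\int\psi_j^{\pm}\diff\mu=M_k+\bigo\bigl((\rho+\epsilon)^{\alpha_0}\bigr)$, which is now weighted by $\int\phi_j\diff\mu\approx\mu(P_j)$ and sums to a single $\bigo\bigl((\rho+\epsilon)^{\alpha_0}\bigr)$; only the decorrelation errors $\lipnorm{\phi_j}\lipnorm{\psi_j^{\pm}}e^{-\tau k}$ may be summed over all cells. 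Your final displayed error formula is the one this correct order produces, so I read the sketch as intending it, but the phrase ``the approximation errors are controlled by\dots'' sits exactly where a naive summation fails. Two smaller points: when $r_k(z_j)\lesssim\rho+\epsilon$ the thin-annuli bound is not applicable as stated and you should fall back on Assumption~\ref{assumption:frostman}, replacing $\alpha_0$ by $\min\{\alpha_0,s\}$ throughout; and the uniform convergence $\sup_x r_k(x)\to 0$, needed so that all radii lie below $r_0$, does follow from $M_k\to 0$ by compactness of $\supp\mu$ but deserves a line.
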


In the proof of Theorem~\ref{thm:dynamical}, we require a better
estimate than \eqref{eq:correlation:old} when $k \leq \sigma l$ for
some $\sigma \in (0,1]$. This is the contents of the following lemma,
which will be proven in Section~\ref{sec:largelproof}.

\begin{lemma}
  \label{lem:largel}
  Let $E_k = \{x \in X : T^{k}x \in B(x,r_k(x))\}$ and suppose that
  Assumptions~\ref{assumption:doc}, \ref{assumption:frostman} and
  \ref{assumption:thinAnnuli} hold. Let $\sigma \in (0,1]$ be as in
  Assumption~\ref{assumption:doc} and suppose further that $(M_k)$ 
  converges to $0$. Then there exists $C, \eta_1 > 0$ such that for all
  $k \leq \sigma l$,
  \[
    \mu(E_k \cap E_{k+l}) \leq \mu(E_k) \mu(E_{k+l})
    + C \mu(E_{k+l}) e^{-\eta_1 k} + e^{-\eta_1 l}.
  \]
\end{lemma}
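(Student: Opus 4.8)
The plan is to estimate $\mu(E_k \cap E_{k+l})$ by approximating the indicator functions of the events $E_k$ and $E_{k+l}$ by Lipschitz functions and then applying the partial decorrelation estimate \eqref{eq:doc:partial} from Assumption~\ref{assumption:doc}. Recall that $x \in E_k$ means $d(x, T^k x) < r_k(x)$, i.e., $T^k x \in B(x, r_k(x))$; the difficulty is that the ``target ball'' depends on the base point $x$, so $\charfun_{E_k}(x)$ is not of the product form $\varphi_1(x)\varphi_2(T^k x)$ needed to apply decorrelation directly. The standard device, used in \cite{Persson, ARS} and underlying Lemma~\ref{lem:estimates:general}, is to fix a partition of $X$ into small pieces $P$ of diameter $\approx \epsilon$; on each piece $r_k(\cdot)$ varies by at most $\epsilon$ (since $[r_k]_1 = 1$), so on $P$ the event $E_k$ is sandwiched between two events of the form $\{T^k x \in B(x_P, r)\}$ for fixed center $x_P$ and radius $r \in \{r_k(x_P) - \epsilon, r_k(x_P)+\epsilon\}$, up to an error controlled by the thin annuli Assumption~\ref{assumption:thinAnnuli}, which contributes $\bigo(\epsilon^{\alpha_0})$ per piece. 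The Frostman Assumption~\ref{assumption:frostman} then lets one smooth the ball indicator $\charfun_{B(x_P, r)}$ into a Lipschitz bump $\varphi$ with $\supnorm{\varphi} \leq 1$, $[\varphi]_1 \lesssim \epsilon^{-1}$, and $\int \varphi \diff\mu = \mu(B(x_P,r)) + \bigo(\epsilon^{\alpha_0})$ — again using thin annuli.

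The key step is then: for $x \in E_k \cap E_{k+l}$ with $x$ in a piece $P$, we have $T^k x \in B(x_P, r_k(x_P)+\epsilon)$ and $T^{k+l} x \in B(x_P, r_{k+l}(x_P)+\epsilon)$, while membership in $E_k$ also forces $x$ itself into a neighborhood of $T^{-k}B(x_P, \cdot)$. Writing $\varphi_1$ for a smoothed indicator of $P$ (or of the relevant preimage set — one has to be slightly careful here; following \cite{ARS} one takes $\varphi_1$ to be essentially $\charfun_P$ smoothed, and absorbs the constraint coming from $E_k$ into $\varphi_2$), $\varphi_2$ for a smoothed indicator of $B(x_P, r_k(x_P))$, and $\varphi_3$ for a smoothed indicator of $B(x_P, r_{k+l}(x_P))$, we bound
\[
  \mu(E_k \cap E_{k+l}) \leq \sum_{P} \int_X \varphi_{1,P}\, \varphi_{2,P}\circ T^k\, \varphi_{3,P}\circ T^{k+l} \diff\mu + (\text{annuli errors}).
\]
Now apply \eqref{eq:doc:partial} (valid since $k \leq \sigma l$) to each summand: the main term becomes $\sum_P \int \varphi_{1,P}\varphi_{2,P}\circ T^k \diff\mu \cdot \int \varphi_{3,P}\diff\mu$, and the error is $\sum_P C\lipnorm{\varphi_{1,P}}\lipnorm{\varphi_{2,P}}\lipnorm{\varphi_{3,P}} e^{-\tau l} \lesssim (\#\{P\})\, \epsilon^{-3} e^{-\tau l}$. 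Since $\#\{P\} \lesssim \epsilon^{-\dim X}$, choosing $\epsilon = e^{-\beta l}$ for a small enough $\beta > 0$ makes this total error $\leq \tfrac12 e^{-\eta_1 l}$. The leading term $\sum_P \int \varphi_{1,P}\varphi_{2,P}\circ T^k\diff\mu \cdot \mu(B(x_P, r_{k+l}(x_P)))$ must be shown to be at most $\mu(E_k)\mu(E_{k+l}) + C\mu(E_{k+l})e^{-\eta_1 k} + (\text{small})$: the factor $\int \varphi_{1,P}\varphi_{2,P}\circ T^k \diff\mu$ should, after summing over $P$ and after a second application of decorrelation — this time \eqref{eq:doc:total} or the standard two-fold consequence used in Lemma~\ref{lem:estimates:general} — be comparable to $\mu(E_k \cap P)$ up to $e^{-\eta_0 k}$-type errors, and $\mu(B(x_P, r_{k+l}(x_P))) = M_{k+l} + \bigo(\epsilon^{\alpha_0})$ is uniform in $P$, so it factors out as $\mu(E_{k+l})$ (up to $\bigo(e^{-\eta_0(k+l)})$ via \eqref{eq:Ekmeasure:general}) times $\sum_P \mu(E_k \cap P) \approx \mu(E_k)$; collecting the $\mu(E_{k+l})$-weighted $\epsilon$-annuli errors into the $C\mu(E_{k+l})e^{-\eta_1 k}$ term and the unweighted ones into $e^{-\eta_1 l}$ finishes the bound, after shrinking $\eta_1$ to the smallest of the exponents produced.

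The main obstacle I expect is the bookkeeping in the ``second application of decorrelation'' step: extracting the factor $\mu(E_{k+l})$ cleanly (rather than $M_{k+l}$ or $\mu(B_{k+l})$) requires matching the smoothing errors against $\mu(E_{k+l})$ rather than against $1$, so that the error term genuinely has the form $C\mu(E_{k+l})e^{-\eta_1 k}$ and not merely $Ce^{-\eta_1 k}$ — this is the improvement over \eqref{eq:correlation:old} and is presumably where the partition of Lemma~\ref{lem:partition} is used to keep the per-piece errors proportional to the local measure. A secondary technical point is ensuring the piece $\varphi_{1,P}$ that carries the $E_k$-constraint is set up so that $\int \varphi_{1,P}\varphi_{2,P}\circ T^k\diff\mu$ really does sum to something $\leq \mu(E_k) + (\text{exponentially small})$ rather than overcounting across adjacent pieces; a partition of unity and a union bound, with the overlap controlled by thin annuli, handles this but needs care with the constants.
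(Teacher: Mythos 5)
Your overall strategy is the same as the paper's: smooth the recurrence indicators into Lipschitz functions (with thin annuli controlling the smoothing error and \eqref{eq:Ekmeasure:general} converting $M_k$ into $\mu(E_k)$), localise the base point over a partition so that the target ball has a frozen centre, apply the partial three-fold decorrelation \eqref{eq:doc:partial} to peel off the $(k+l)$-factor as $\mu(E_{k+l})$, and then evaluate the remaining self-correlation integral $\int f(x,T^kx)\diff\mu$ by a second application of decay of correlations. The paper implements the localisation with the Conze--Le Borgne partition of Lemma~\ref{lem:partition}, whose densities $\rho_{P,n}$ approximate $\mu(P)^{-1}\charfun_P$ in $L^1$ with quantified sup and Lipschitz norms; this is precisely the device you anticipate is needed to keep the per-piece errors proportional to the local measure, since thin annuli alone does not control the $\mu$-measure of the collar of an arbitrary cube when $\mu$ is not absolutely continuous.

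The one step that would fail as literally written is the second decorrelation. You propose to extract $\mu(E_k\cap P)$ (or its sum over $P$) piece by piece on the \emph{same} fine partition of mesh $\epsilon=e^{-\beta l}$. The accumulated error there is of order $\#\{P\}\cdot\epsilon^{-O(1)}e^{-\tau k}=e^{O(\beta l)}e^{-\tau k}$, and since the hypothesis is only $k\le\sigma l$ --- so $l$ may be arbitrarily large compared with $k$ --- no choice of $\beta>0$ makes this $O(e^{-\eta_1 k})$. The paper avoids this by first un-localising (Lemma~\ref{lem:localise} applied in reverse, at cost $O(e^{-2\eta l})$, which is harmless) to recover the global integral $\int f(x,T^kx)\diff\mu$, and only then re-localising with a second, coarser partition $\pcal_k$ at scale $e^{-ck}$ before applying decay of correlations; all $k$-indexed errors are then of size $e^{O(\eta k)}e^{-\tau k}$ and are absorbed into $C\mu(E_{k+l})e^{-\eta_1 k}$ after multiplication by the already-extracted factor $\mu(E_{k+l})$, exactly as you intend. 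With that two-scale correction your argument coincides with the paper's.
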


When $k$ is large and $l$ is small, we find that the estimates in the
previous two lemmas are insufficient for our proof of
Theorem~\ref{thm:dynamical}, which is why we impose
Assumption~\ref{assumption:srt}. Thus, to be able to apply
Theorem~\ref{thm:dynamical} to various systems, we must verify
Assumption~\ref{assumption:srt}. This will be done for specific
systems in Section~\ref{sec:applications}.

\section{Proof of Main Theorems}
\label{sec:mainproofs}

We prove Theorem~\ref{thm:general} first and then use it to prove
Theorem~\ref{thm:dynamical}.

\subsection{Proof of the general theorem}

\begin{proof}[Proof of Theorem~\ref{thm:general}]
  We start by noticing that if
  $\sum_{k=1}^\infty \mu(E_k) < \infty$, then by the easy part of
  the Borel--Cantelli lemma, $\mu (\limsup E_n) = 0$ and as a
  consequence, for almost all $x$, the sum
  $\sum_{k=1}^n \mathbbm{1}_{E_k} (x)$ is bounded from above as
  $n$ grows. Since $\Phi (n)$ is also bounded in this case, there
  is nothing to prove.
  
  From now on, we assume that $\sum_{k=1}^\infty \mu
  (E_k)$. Without loss of generality, we may assume that
  $\mu (E_k) \geq C k e^{- \tau (\log k)^2}$. Indeed, we may
  replace any instance of $\mu(E_k)$ that does not satisfy the
  inequality by $C k e^{- \tau (\log k)^2}$. The new sum
  $\Phi(n)$ differs from the old sum only by a constant, which
  can be included into the error term.

  We take $1 \leq m \leq n$ and consider the sum
  \[
    S_{m,n} = \int \biggl( \sum_{k = m}^n \mathbbm{1}_{E_k} - \mu
    (E_k) \biggr)^2 \diff\mu.
  \]
  We may rewrite this sum as
  \[
    S_{m,n} = \sum_{k=m}^n (\mu (E_k) - \mu (E_k)^2) + 2 \sum_{k =
      m}^{n-1} \sum_{l = 1}^{n-k} (\mu (E_k \cap E_{k+l}) -
    \mu(E_k) \mu (E_{k+l})).
  \]
  Clearly, we have
  \[
    \sum_{k=m}^n (\mu (E_k) - \mu (E_k)^2) \leq \sum_{k=m}^n \mu
    (E_k),
  \]
  and we let
  \[
    \Sigma_{m,n} = \sum_{k = m}^{n-1} \sum_{l = 1}^{n-k} (\mu (E_k
    \cap E_{k+l}) - \mu(E_k) \mu (E_{k+l})).
  \]

  To estimate $\Sigma_{m,n}$ we split it into several sums
  \[
  \begin{split}
    \Sigma_{m,n} = &\sum_{k=m}^{n-1} \sum_{l=1}^{(\log k)^2}
      (\mu (E_k \cap E_{k+l}) - \mu(E_k) \mu (E_{k+l})) \\
    &+ \sum_{k=m}^{n-1} \sum_{l=(\log k)^2+1}^{k/\sigma}
      (\mu (E_k \cap E_{k+l}) - \mu(E_k) \mu (E_{k+l})) \\
    &+ \sum_{k=m}^{n-1} \sum_{l=k/\sigma+1}^{n-k}
      (\mu (E_k \cap E_{k+l}) - \mu(E_k) \mu (E_{k+l}))
  \end{split}
  \]
  and use different estimates on the terms.

  We start with terms with $l \leq (\log k)^2$ and use
  \eqref{eq:nottoobig} and \eqref{eq:quasiindependence} to get
  \begin{align*}
    \sum_{k=m}^{n-1} \sum_{l = 1}^{(\log k)^2}
    & (\mu (E_k \cap E_{k+l}) - \mu(E_k) \mu (E_{k+l})) \\
    & \leq \sum_{k=m}^n C (\mu (E_k)^{1+\delta} (\log k)^{2+N}
      + \mu (E_k) ) \\
    & \leq C \sum_{k=m}^n \mu (E_k).
  \end{align*}

  Continuing with terms with $(\log k)^2 \leq l \leq k/\sigma$, we use
  \eqref{eq:mixing} to get that
  \[
  \begin{split}
    \sum_{k=m}^{n-1} \sum_{l=(\log k)^2}^{k/\sigma} \mu(E_k \cap
      E_{k+l}) - \mu(E_k) \mu(E_{k+l})
    &\leq \sum_{k=m}^{n-1} \sum_{l=(\log k)^2}^{k/\sigma} C(e^{-\tau k}
      + e^{-\tau l}) \\
    &\leq \sum_{k=m}^{n-1} C(ke^{-\tau k} + e^{-\tau (\log k)^2}) \\
    &\leq C \sum_{k=m}^{n-1} \mu(E_k),
  \end{split}
  \]
  where we have used $\mu(E_k) \geq C k e^{-\tau (\log k)^2}$ in the
  last inequality.

  Finally, for terms with $l \geq k/\sigma$, we use \eqref{eq:largel}
  to obtain that
  \[
  \begin{split}
    \sum_{k=m}^{n-1} \sum_{l=k/\sigma}^{n-k} \mu(E_k \cap
    E_{k+l}) - \mu(E_k &) \mu(E_{k+l}) \\
    &\leq \sum_{k=m}^{n-1} \sum_{l=k/\sigma}^{n-k} \bigl( C\mu(E_{k+l})
      e^{-\tau k} + Ce^{-\tau l} \bigr) \\
    &\leq \sum_{k=m}^{n-1} \sum_{j=2k}^{n} C\mu(E_j) e^{-\tau k}
      + \sum_{k=m}^{n-1} Ce^{-\tau k} \\
    &\leq \sum_{k=m}^{n-1} \biggl( \sum_{j=2m}^{n} C\mu(E_j) \biggr)
    e^{-\tau k} + \sum_{k=m}^{n} C\mu(E_k) \\
    &\leq C \sum_{k=m}^{n} \mu(E_k),
  \end{split}
  \]
  where we have used $k/\sigma \geq k$ in the second inequality and
  $\mu(E_k) \geq C k e^{-\tau k}$ in the third inequality.

  Thus, $\Sigma_{m,n} \leq C \sum_{k=m}^{n} \mu(E_k)$ as desired and
  \[
    S_{m,n} \leq C \sum_{k=m}^n \mu (E_k).
  \]
  Theorem~\ref{thm:dynamical} now follows by the G\'{a}l--Koksma Lemma
  (Lemma~\ref{lem:galkoksma}).
\end{proof}

\subsection{Proof of the dynamical theorem}

Recall that we assume that
Assumptions~\ref{assumption:doc}--\ref{assumption:thinAnnuli} hold. We
also let $E_k = \{x \in X : T^{k}x \in B(x,r_k(x))\}$ and
$\Phi(n) = \sum_{k=1}^{n} \mu\bigl( B(x,r_k(x)) \bigr) =
\sum_{k=1}^{\infty}M_k$.

\begin{proof}[Proof of Theorem~\ref{thm:dynamical}]
  Similar to the proof of Theorem~\ref{thm:general}, we may assume,
  without loss of generality, that $M_k \geq Cke^{-\tau (\log k)^2}$
  for some $C > 0$.
  In order to apply Theorem~\ref{thm:general}, we first verify that
  \eqref{eq:nottoobig}--\eqref{eq:largel} hold.

  By \eqref{eq:Ekmeasure:general} in Lemma~\ref{lem:estimates:general}
  and the assumption on $(M_k)$, we have that
  \[
    \mu(E_k) \leq M_k + Ce^{-\tau k} \leq C M_k
    \leq \frac{C}{(\log k)^{(2 + N)/\delta}}.
  \]
  Hence, \eqref{eq:nottoobig} holds.
  The estimates \eqref{eq:mixing} and \eqref{eq:largel} hold as a direct
  consequence of Lemma~\ref{lem:estimates:general} and
  \ref{lem:largel}, respectively.
  Finally, \eqref{eq:quasiindependence} is exactly the condition in
  Assumption~\ref{assumption:srt}.
  Thus, an application of Theorem~\ref{thm:general} allows us to
  conclude that
  \begin{equation}\label{eq:Ekestimate}
    \sum_{k=1}^{n} \charfun_{E_k} = \sum_{k=1}^{n} \mu(E_k) + \bigo
    \bigl( \Psi(n)^{1/2} (\log \Psi(n))^{3/2 + \varepsilon} \bigr)
  \end{equation}
  $\mu$-a.e., where $\Psi(n) = \sum_{k=1}^{n} \mu(E_k)$. Now, by the
  estimate \eqref{eq:Ekmeasure:general} in
  Lemma~\ref{lem:estimates:general}, we obtain that
  $\Psi(n) = \Phi(n) + \bigo(1)$. Furthermore,
  $\charfun_{E_k}(x) = \charfun_{B(x,r_k(x))}(x)$ by the definition of
  $E_k$. Thus, from \eqref{eq:Ekestimate} we obtain
  \[
    \sum_{k=1}^{n} \charfun_{B(x,r_k(x))} (T^k x) =
    \sum_{k=1}^{n} \mu(B(x,r_k(x))) + \bigo \bigl( \Phi(n)^{1/2}
    (\log \Phi(n))^{3/2 + \varepsilon} \bigr)
  \]
  for $\mu$-a.e., $x \in X$, as desired.
\end{proof}

\section{Proof of Lemma~\ref{lem:largel}}
\label{sec:largelproof}

This section is dedicated to proving Lemma~\ref{lem:largel}.
We make use of the following lemma, which was communicated to us
through private correspondence with Conze and Le Borgne. It fixes an
inaccuracy in creating a partition of a Riemannian manifold with the
required properties in \cite[Theorem~3.3]{CL}.

\begin{lemma}[Conze--Le Borgne]
  \label{lem:partition}
  Suppose that $X$ is a compact $d$-dimensional smooth manifold and let
  $\kappa > 0$. Then there exists a constant $C > 0$ such that, for all
  $n \in \naturals$, there exists a subset $Y_n \subset X$ and a
  partition $\pcal_n$ of $X \setminus Y_n$ such that
  \begin{align}
    \label{eq:partitionSize}
    \# \pcal_n &\leq e^{d\kappa n}, \\
    \label{eq:diam}
    |\pcal_n| &\leq C e^{-\kappa n}, \\
    \label{eq:carveout}
    \mu(Y_n) &\leq e^{-\kappa n}.
  \end{align}
  Moreover, for each $P \in \pcal_n$, there exists a density function
  $\rho_{n,P} \colon X \to [0,\infty)$ such that
  \begin{align}
    \label{eq:rhoSup}
    \supnorm{\rho_{P,n}} &\leq C e^{(d+1) \kappa n}, \\
    \label{eq:rhoLip}
    [\rho_{P,n}]_1 &\leq C e^{2(d+2)\kappa n}, \\
    \label{eq:charApprox}
    \lnorm{\rho_{P,n} - \mu(P)^{-1} \charfun_P}{1}
      &\leq C e^{- \kappa n}.
  \end{align}
\end{lemma}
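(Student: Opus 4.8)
The plan is to build $\pcal_n$ from a single Euclidean grid and to obtain the density functions by mollifying the indicators of its cells, carving into $Y_n$ precisely those cells that would violate one of the six estimates. Concretely, I would fix a smooth embedding $X \hookrightarrow \reals^{D}$ (Whitney), so that the intrinsic distance $d$ and the ambient Euclidean distance are comparable up to a fixed bi-Lipschitz constant on the compact set $X$. For a mesh $h>0$ and an offset $v \in [0,h)^{D}$, let $\pcal(h,v)$ be the collection of nonempty sets $X \cap Q$, where $Q$ ranges over the cubes of the grid $h\mathbb{Z}^{D} + v$. Each such cell has Euclidean diameter at most $\sqrt{D}\,h$, hence intrinsic diameter $\leq C h$; and since $X$ is $d$-dimensional, the number of cubes meeting $X$ is at most $A h^{-d}$ for a constant $A = A(X,D)$. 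Choosing $h = A^{1/d} e^{-\kappa n}$ then forces $\#\pcal(h,v) \leq e^{d\kappa n}$ and $|\pcal(h,v)| \leq C e^{-\kappa n}$, i.e.\ \eqref{eq:partitionSize} and \eqref{eq:diam}. For each kept cell $P$ I would set
\[
  \rho_{P,n} = \mu(P)^{-1}\,\tilde\rho_{P,n},
  \qquad
  \tilde\rho_{P,n}(x) = \max\bigl(0,\, 1 - \dist(x,P)/r\bigr),
\]
with collar width $r = r(n)$ fixed below; this is nonnegative, equals $1$ on $P$, vanishes off the intrinsic $r$-neighbourhood $P^{(r)}$, and is $(1/r)$-Lipschitz.

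I would put a cell $P$ into $Y_n$ in two cases: (i) if it is \emph{light}, $\mu(P) < \tfrac13 e^{-(d+1)\kappa n}$, or (ii) if it has a \emph{fat collar}, $\mu(P^{(r)}\setminus P) > e^{-\kappa n}\mu(P)$; the remaining cells form $\pcal_n$, and their union is $X \setminus Y_n$. For a kept cell $\mu(P) \geq \tfrac13 e^{-(d+1)\kappa n}$, so $\supnorm{\rho_{P,n}} = \mu(P)^{-1} \leq 3 e^{(d+1)\kappa n}$, giving \eqref{eq:rhoSup}, while $[\rho_{P,n}]_1 = (r\mu(P))^{-1} \leq 3 r^{-1} e^{(d+1)\kappa n}$; with $r \sim e^{-3\kappa n}$ (chosen below) this is $\leq C e^{(d+4)\kappa n} \leq C e^{2(d+2)\kappa n}$, giving \eqref{eq:rhoLip}. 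Since $\tilde\rho_{P,n} - \charfun_P$ is supported on $P^{(r)}\setminus P$ and bounded by $1$, a kept (non-fat) cell satisfies
\[
  \lnorm{\rho_{P,n} - \mu(P)^{-1}\charfun_P}{1}
  = \mu(P)^{-1}\lnorm{\tilde\rho_{P,n} - \charfun_P}{1}
  \leq \mu(P)^{-1}\mu(P^{(r)}\setminus P) \leq e^{-\kappa n},
\]
which is \eqref{eq:charApprox}. If an exactly normalised density is wanted, one divides by $\int \tilde\rho_{P,n}\diff\mu$ rather than $\mu(P)$, changing the estimates only by constants.

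The main obstacle — and, I expect, the point that the randomised construction repairs relative to \cite{CL} — is bounding $\mu(Y_n)$ for a possibly singular $\mu$. The light cells contribute at most $\#\pcal(h,v)\cdot \tfrac13 e^{-(d+1)\kappa n} \leq \tfrac13 e^{-\kappa n}$, which is harmless. The fat cells are controlled by averaging over the offset $v$. Writing $B_v$ for the set of $x \in X$ within ambient distance $\sim r$ of the boundary of its grid cube, Fubini together with the observation that, for fixed $x$, each of its $D$ coordinates lands within $r$ of the grid for a set of offsets of measure $\leq 2r$, yields
\[
  \frac{1}{h^{D}}\int_{[0,h)^{D}} \mu(B_v)\diff v \leq \frac{2Dr}{h}.
\]
Hence there is an offset $v^{*}$ with $\mu(B_{v^{*}}) \leq 2Dr/h$, and since $r \ll h$ the cubes overlap boundedly, so $\sum_{P}\mu(P^{(r)}\setminus P) \leq C\mu(B_{v^{*}}) \leq C r/h$. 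Each fat cell has $\mu(P^{(r)}\setminus P) > e^{-\kappa n}\mu(P)$, so the fat cells have total measure at most $e^{\kappa n}\sum_P \mu(P^{(r)}\setminus P) \leq C e^{\kappa n} r/h$. Choosing $r = c\,h\,e^{-2\kappa n} \sim e^{-3\kappa n}$ with $c$ small makes this $\leq \tfrac13 e^{-\kappa n}$; this is the same scale $r \sim e^{-3\kappa n}$ used for \eqref{eq:rhoLip}, and it lies far below $h$, so the Fubini bound applies. Summing the two contributions gives $\mu(Y_n) \leq e^{-\kappa n}$, i.e.\ \eqref{eq:carveout}.

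Two remarks on the bookkeeping. The bounds \eqref{eq:rhoSup}--\eqref{eq:charApprox} carry a constant $C$, so any finitely many small $n$ (where $h$ may exceed $\dist$-diameters) are absorbed by enlarging $C$; the bounds \eqref{eq:partitionSize} and \eqref{eq:carveout} carry no constant but are easiest when $e^{d\kappa n}$, respectively $e^{-\kappa n}$, is large, i.e.\ for small $n$, so they need no adjustment there. Finally, the $L^{1}$ norm in \eqref{eq:charApprox} is taken with respect to $\mu$, matching the use of these densities as Lipschitz observables in the decorrelation Assumption~\ref{assumption:doc}; the argument is identical if $L^{1}$ is read against $\leb$, since only the collar measure of $\partial P$ enters.
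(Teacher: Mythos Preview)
Your argument is correct and follows a genuinely different route from the paper's. The paper works chart by chart in $[0,1]^d$ and uses a two-scale construction: over the coarse grid of mesh $\Theta_n^{-1}\sim e^{-\kappa n}$ it superposes a much finer grid of mesh $(\Theta_n\theta_n)^{-1}$, and within each coarse strip selects, by pigeonhole, one fine sub-strip of $\mu$-measure at most $\theta_n^{-1}$. These chosen thin strips then serve both as the cell boundaries and as the transition zones for the piecewise-affine densities (which are built to have pairwise disjoint supports), so the collar $R_{k,l}\setminus Q_{k,l}$ has small $\mu$-measure \emph{by construction}; only the light cells need to be carved into $Y_n$. Your approach instead keeps a single-scale grid in an ambient Whitney embedding, randomises the offset $v$, and uses Fubini to locate an offset whose total boundary layer $B_v$ has small $\mu$-measure; you then carve out both the light cells and the ``fat-collar'' cells. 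The averaging-plus-excision route avoids the chart bookkeeping and actually gives a slightly sharper Lipschitz exponent ($(d+4)\kappa n$ versus the paper's $2(d+2)\kappa n$); the paper's pigeonhole route is fully deterministic and produces densities with disjoint supports, although that extra feature is never used downstream in Lemma~\ref{lem:localise} or Lemma~\ref{lem:largel}.
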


\begin{proof}
  By considering local charts, we may prove the statement when
  $X = [0,1]^{d}$. Furthermore, we demonstrate the proof for $d=2$ as
  it captures the essential idea of the general proof while keeping the
  notation simple.

  Let $\kappa > 0$. Define $\Theta_n = \lfloor e^{\kappa n} \rfloor$
  and $\theta_n = \lfloor e^{4\kappa n} \rfloor$. Partition $[0,1]^2$
  by $(\Theta_n\theta_n)^2$ squares of equal size
  \[
    \biggl\{ I(i,j) = \biggl[\frac{i-1}{\Theta_n\theta_n},
    \frac{i}{\Theta_n\theta_n}\biggr] \times
    \biggl[\frac{j-1}{\Theta_n\theta_n},
    \frac{j}{\Theta_n\theta_n}\biggr] : i,j = 1,\dotsc,\Theta_n\theta_n
    \biggr\}.
  \]
  For each $i = 1,\dotsc,\Theta_n \theta_n$ consider
  \[
    V_i^{n} = \bigcup_{j=1}^{\Theta_n\theta_n} I(i,j).
  \]
  Since $\mu(\cup_{i=(s-1)\theta_n + 1}^{s\theta_n} V_i^{n}) \leq 1$
  for each $s=1,\dotsc,\Theta_n$, we have that there exists, for each
  $s=1,\dotsc,\Theta_n$, an $i_s \in ( (s-1)\theta_n, s\theta_n]$ such
  that $\mu(V_{i_s}^{n}) \leq \theta_n^{-1}$. We may do the same for
  each $j=1,\dotsc,\Theta_n\theta_n$ and
  \[
    H_j^{n} = \bigcup_{i=1}^{\Theta_n\theta_n} I(i,j)
  \]
  to obtain, for each $s=1,\dotsc,\Theta_n$, a
  $j_s \in ( (s-1)\theta_n, s\theta_n]$ such that
  $\mu(H_{j_s}^{n}) \leq \theta_n^{-1}$. Now define the sets
  \begin{align*}
    Q_{k,l} &= \bigcup_{i=i_{k-1}+1}^{i_k-1}
    \bigcup_{j=j_{l-1}+1}^{j_l-1} I(i,j) \\
    P_{k,l} &= \bigcup_{i=i_{k-1}+1}^{i_k}
    \bigcup_{j=j_{l-1}+1}^{j_l} I(i,j) \\
    R_{k,l} &= \bigcup_{i=i_{k-1}}^{i_k}
    \bigcup_{j=j_{l-1}}^{j_l} I(i,j).
  \end{align*}
  Note that $\{P_{k,l}\}_{k,l = 1,\dotsc,\Theta_n}$ partitions
  $[0,1]^2$ and that
  $R_{k,l} \setminus Q_{k,l} \subset V_{i_{k-1}}^{n} \cup V_{i_k}^{n}
  \cup H_{j_{l-1}}^{n} \cup H_{j_l}^{n}$ and
  $\mu( R_{k,l} \setminus Q_{k,l} ) \leq C \theta_n^{-1}$.

  Define $S_0 = \{(k,l) : \mu(P_{k,l}) < e^{-3\kappa n} \}$ and
  $Y_n = \cup_{(k,l) \in S_0} P_{k,l}$. Then
  $\pcal_n = \{P_{k,l} : (k,l) \notin S_0\}$ is a partition of
  $[0,1]^2 \setminus Y_n$ with
  $\#\pcal_n \leq \Theta_n^2 \leq e^{2\kappa n}$, proving
  \eqref{eq:partitionSize}, and
  $|P_{k,l}| \leq \Theta_n^{-1} \leq C e^{-\kappa n}$ for all
  $k,l$, proving \eqref{eq:diam}. Moreover,
  $\mu(Y_n) \leq \Theta_n^2 e^{-3\kappa n} \leq e^{-\kappa n}$, which
  proves \eqref{eq:carveout}.

  We now define the densities $\rho_{n,P} \colon X \to [0,\infty)$. For
  each $P_{k,l} \in \pcal_n$ define $\psi_{k,l} \colon X \to [0,1]$ by
  \[
    \psi_{k,l} =
    \begin{cases}
      1 & \text{on } Q_{k,l} \\
      0 & \text{on } X \setminus R_{k,l} \\
      \text{affine} & \text{on } R_{k,l} \setminus Q_{k,l}
    \end{cases}
  \]
  in such a way that $\psi_{k,l}$ have disjoint support and are
  Lipschitz continuous such that for some $C > 0$ we have
  $[\psi_{k,l}]_1 \leq C \Theta_n \theta_n \leq C e^{5\kappa n}$.
  Now, for each $P_{k,l} \in \pcal_n$ define
  $\rho_{k,l} = \mu(P_{k,l})^{-1} \psi_{k,l}$. Then
  $\supnorm{\rho_{k,l}} \leq \mu(P_{k,l})^{-1} \leq e^{3\kappa n}$,
  proving \eqref{eq:rhoSup}, and
  $[\rho_{k,l}]_1 \leq Ce^{3\kappa n} e^{5\kappa n} = C e^{8\kappa n}$,
  proving \eqref{eq:rhoLip}.

  Finally, let $\gamma_{k,l} = \charfun_{P_{k,l}} - \psi_{k,l}$ for
  $P_{k,l} \in \pcal_n$. Then
  $0 \leq |\gamma_{k,l}| \leq \charfun_{R_{k,l} \setminus Q_{k,l}}$ and
  \[
  \begin{split}
    \int_X \Bigl| \rho_{k,l}
    - \frac{\charfun_{P_{k,l}}}{\mu(P_{k,l})} \Bigr| \diff\mu
    &= \frac{1}{\mu(P_{k,l})} \int_X | \psi_{k,l}
    - \charfun_{P_{k,l}} |\diff\mu \\
    &= \frac{1}{\mu(P_{k,l})} \int_X | \gamma_{k,l} |\diff\mu \\
    &\leq \frac{1}{\mu(P_{k,l})} \mu(R_{k,l} \setminus Q_{k,l}) \\
    &\leq C\frac{1}{\mu(P_{k,l})} \theta_n^{-1} \\
    &\leq C e^{3\kappa n} e^{-4\kappa n} \\
    &= C e^{-\kappa n},
  \end{split}
  \]
  which proves \eqref{eq:charApprox}.
\end{proof}

Before proving Lemma~\ref{lem:largel}, we formulate the following
lemma for convenience.

\begin{lemma}\label{lem:localise}
  Let $X$ be a compact Riemannian manifold and let $\kappa > 0$ and
  $n \in \naturals$. Suppose that
  $h \colon X \times X \times X \to \reals$ is Lipschitz continuous.
  Then
  \begin{multline*}
    \int_X h(x, T^{k}x, T^{k+l}x)
    = \sum_{P \in \pcal_n} \mu(P) \int_X \rho_{P,n}(x) h(x_{P,n},
    T^{k}x, T^{k+l}x) \diff\mu(x) \\
    + \bigo( \lipnorm{h} e^{-\kappa n} ),
  \end{multline*}
  where $x_{P,n}$ is an arbitrary point in $P \in \pcal_n$.
\end{lemma}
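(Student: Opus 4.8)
The plan is to use the partition $\pcal_n$ from Lemma~\ref{lem:partition} to replace the dependence of $h$ on its first argument by a fixed base point $x_{P,n}$ within each partition element, at the cost of the claimed exponential error. First I would decompose $X = Y_n \cup \bigcup_{P \in \pcal_n} P$. On the exceptional set $Y_n$, the integrand is bounded in absolute value by $\supnorm{h} \leq \lipnorm{h}$, and since $\mu(Y_n) \leq e^{-\kappa n}$ by \eqref{eq:carveout}, this contributes an error of size $\bigo(\lipnorm{h} e^{-\kappa n})$, which is absorbed. So it remains to handle $\sum_{P \in \pcal_n} \int_P h(x, T^k x, T^{k+l} x) \diff\mu(x)$.

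Next, for a fixed $P \in \pcal_n$, I would compare $\int_P h(x, T^k x, T^{k+l} x) \diff\mu(x)$ with $\mu(P) \int_X \rho_{P,n}(x) h(x_{P,n}, T^k x, T^{k+l} x) \diff\mu(x)$ in two steps. Step one: replace $h(x, T^k x, T^{k+l} x)$ by $h(x_{P,n}, T^k x, T^{k+l} x)$ on $P$. Since $h$ is Lipschitz and $|P| \leq |\pcal_n| \leq C e^{-\kappa n}$ by \eqref{eq:diam}, the pointwise error is at most $[h]_1 |P| \leq C \lipnorm{h} e^{-\kappa n}$ on $P$; integrating over $P$ and summing over $\pcal_n$ (whose total measure is at most $1$) gives a net error $\bigo(\lipnorm{h} e^{-\kappa n})$. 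Step two: rewrite $\int_P h(x_{P,n}, T^k x, T^{k+l} x) \diff\mu(x) = \int_X \mu(P)^{-1}\charfun_P(x)\, h(x_{P,n}, T^k x, T^{k+l} x)\diff\mu(x) \cdot \mu(P)$ and replace $\mu(P)^{-1}\charfun_P$ by the density $\rho_{P,n}$. The function $x \mapsto h(x_{P,n}, T^k x, T^{k+l} x)$ is bounded by $\supnorm{h} \leq \lipnorm{h}$, so by \eqref{eq:charApprox} the error incurred in element $P$ is at most $\mu(P)\supnorm{h}\, \lnorm{\rho_{P,n} - \mu(P)^{-1}\charfun_P}{1} \leq C\mu(P)\lipnorm{h} e^{-\kappa n}$; summing over $P \in \pcal_n$ again yields $\bigo(\lipnorm{h} e^{-\kappa n})$ since $\sum_P \mu(P) \leq 1$. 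Collecting the three error contributions gives the statement.

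The only mild subtlety, and the step I would be most careful about, is bookkeeping the constants so that the $e^{-\kappa n}$ rate is genuinely uniform in $k$ and $l$: this works precisely because in every estimate above I only ever use $\supnorm{h}$ or $[h]_1$ of the full Lipschitz function $h$ on $X \times X \times X$, never anything about $T^k$ or $T^{k+l}$, so the maps $T^k, T^{k+l}$ enter only through the measure-preserving change of variables implicit in writing the integrals over $X$, and no dynamical constants appear. Note also that the large Lipschitz norms $[\rho_{P,n}]_1 \leq C e^{2(d+2)\kappa n}$ and sup-norms $\supnorm{\rho_{P,n}} \leq C e^{(d+1)\kappa n}$ from \eqref{eq:rhoSup}–\eqref{eq:rhoLip} play \emph{no} role here — they are recorded in Lemma~\ref{lem:partition} for the subsequent application of Assumption~\ref{assumption:doc} inside the proof of Lemma~\ref{lem:largel}, where $\rho_{P,n}$ will be used as one of the Lipschitz test observables $\varphi_i$.
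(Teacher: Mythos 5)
Your proposal is correct and follows essentially the same route as the paper: both split off the exceptional set $Y_n$ via \eqref{eq:carveout}, replace $x$ by $x_{P,n}$ in the first slot using the Lipschitz bound and \eqref{eq:diam}, and then swap $\mu(P)^{-1}\charfun_P$ for $\rho_{P,n}$ using \eqref{eq:charApprox}, with the $\mu(P)$ weights ensuring the sum over $P$ stays bounded. Your bookkeeping of the $\supnorm{h}$ factor and the $\sum_P \mu(P)\le 1$ summation in the last step is, if anything, slightly more explicit than the paper's.
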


\begin{proof}
  Write $\rho_P = \rho_{P,n}$ and $x_P = x_{P,n}$, and let
  \[
    H(x) = \sum_{P \in \pcal_n} h(x_P, T^{k}x, T^{k+l}x) \charfun_P(x).
  \]
  Then
  \[
    \lnorm{h(x, T^{k}x, T^{k+l}x) - H(x)}{1}
    \leq \supnorm{h}\mu(Y_n) + [h]_1 \lvert \pcal_n \rvert
  \]
  since
  \[
    \Bigl| h(x, T^{k}x, T^{k+l}x) - \sum_{P\in\pcal_n} h(x, T^{k}x,
    T^{k+l}x) \charfun_P(x) \Bigr|
    \leq \supnorm{h} \charfun_{Y_n}(x)
  \]
  and
  \[
    \bigl| h(x, T^{k}x, T^{k+l}x) - h(x_P, T^{k}x, T^{k+l}x) \bigr|
    \leq [h]_1 d(x,x_P) \leq [h]_1 |\pcal_n|
  \]
  for $x \in P$. Approximating the functions $\mu(P) ^{-1} \charfun_P$
  by $\rho_P$, we obtain that
  \[
    \Bigl\| H(x) - \sum_{P\in\pcal_n} \mu(P) h(x_P,
    T^{k}x, T^{k+l}x) \rho_P(x) \Bigr\|_{L^{1}}
    \leq 
    \lnorm{\rho_P - \mu(P)^{-1} \charfun_P}{1}.
  \]
  Using the estimates from Lemma~\ref{lem:partition}, we obtain the
  desired inequality.
\end{proof}

We now prove Lemma~\ref{lem:largel}.

\begin{proof}[Proof of Lemma~\ref{lem:largel}]
  Let $\sigma \in (0,1]$ be as in Assumption~\ref{assumption:doc} and
  let $\eta> 0$, which will later be chosen sufficiently small. Fix
  $k \leq \sigma l$. We may consider Lipschitz continuous functions
  $f, g \colon X \times X \to [0,1]$ such that there exists
  $C,\eta' > 0$ independent of $k,l$ with
  \begin{gather}
    \label{eq:f:bounds}
    0 \leq \charfun_{B(x,r_k(x))}(y) \leq f(x,y) \leq 1 \\
    \label{eq:f:lip}
    \lipnorm{f} \leq Ce^{\eta k} \leq Ce^{\eta \sigma l} \\
    \label{eq:f:fixedMeas}
    \int_X f(x,y) \diff\mu(y) = \mu(E_k) + \bigo(e^{-\eta' k})
  \end{gather}
  and
  \begin{gather}
    \label{eq:g:bounds}
    0 \leq \charfun_{B(x,r_{k+l}(x))}(y) \leq g(x,y) \leq 1 \\
    \label{eq:g:lip}
    \lipnorm{g} \leq Ce^{\eta l} \\
    \label{eq:g:fixedMeas}
    \int_X g(x,y) \diff\mu(y) = \mu(E_{k+l}) + \bigo(e^{-\eta' l})
  \end{gather}
  Indeed, let
  $Z_k = \{(x,y) \in X \times X : \charfun_{B(x,r_{k}(x))}(y) = 1\}$
  and define 
  \begin{align*}
    f(x,y) &= \max \bigl\{0, 1-\dist\bigl( (x,y), Z_k \bigr)e^{\eta k} 
      \bigr\}, \\
    g(x,y) &= \max \bigl\{0, 1-\dist\bigl( (x,y), Z_{k+l} \bigr)
      e^{\eta l} \bigr\}.
  \end{align*}
  It is clear that \eqref{eq:f:bounds}, \eqref{eq:f:lip},
  \eqref{eq:g:bounds} and \eqref{eq:g:lip} are satisfied.
  Equation~\eqref{eq:f:fixedMeas} with
  $\eta' = \min \{\eta_0,\alpha_0\eta\}$ can be established using
  Assumption~\ref{assumption:thinAnnuli}, estimate
  \eqref{eq:Ekmeasure:general} in Lemma~\ref{lem:estimates:general} and
  the fact that $f(x,y) \leq \charfun_{B(x,r_k(x) + e^{-\eta k})}(y)$.
  Equation~\eqref{eq:g:fixedMeas} can be established in a similar
  manner. We therefore have that
  \[
    \mu(E_k \cap E_{k+l}) \leq \int f(x,T^{k}x) g(x,T^{k+l}x)
    \diff\mu(x).
  \]

  We first use a partition suited to separate $g$ from the integral.
  Let $\pcal_l$ be a partition as in Lemma~\ref{lem:partition} with
  $\kappa = 3 \eta$. We assign an arbitrary point $x_{P,l} \in P$ to
  each element $P \in \pcal_l$. Applying Lemma~\ref{lem:localise} to
  the function $h(x,y,z) = f(x,y) g(x,z)$ yields
  \begin{multline*}
    \int f(x,T^{k}x) g(x,T^{k+l}x) \diff\mu(x) \\
    \leq \sum_{P\in\pcal_l} \mu(P) \int_X \rho_{P,l}(x) f(x_{P,l},T^{k}x)
    g(x_{P,l},T^{k+l}x) \diff\mu(x) + \bigo(e^{-\eta l}), 
  \end{multline*}
  where we have used \eqref{eq:f:lip} and \eqref{eq:g:lip} (and
  $\sigma \leq 1$) to obtain that
  \[
    \bigo(\lipnorm{f} \lipnorm{g} e^{-\kappa l}) = \bigo(
    e^{(1+\sigma)\eta l - 3\eta l} ) = \bigo( e^{-\eta l}).
  \]
  For each $P \in \pcal_l$, applying the decorrelation estimate
  \eqref{eq:doc:partial} yields
  \begin{multline*}
    \int \rho_{P,l}(x) f(x_{P,l},T^{k}x) g(x_{P,l},T^{k+l}x)
    \diff\mu(x) \\
    = \int \rho_{P,l}(x) f(x_{P,l},T^{k}x) \diff\mu(x) \int_X
    g(x_{P,l},x) \diff\mu(x) \\
    + \bigo\bigl( \lipnorm{\rho_{P,l}} \lipnorm{f} \lipnorm{g}
    e^{-\tau l} \bigr).
  \end{multline*}
  By Lemma~\ref{lem:partition} and the estimates \eqref{eq:f:lip} and
  \eqref{eq:g:lip}, we have that
  \[
    \lipnorm{\rho_{P,l}} \lipnorm{f} \lipnorm{g}
    = \bigo\bigl(  e^{6(d+2)\eta l} e^{\eta \sigma l} e^{\eta l}\bigr)
    = \bigo \bigl( e^{(6d+13+\sigma)\eta l} \bigr).
  \]
  Hence,
  \begin{multline*}
    \mu(E_k \cap E_{k+l})
    \leq \sum_{P\in\pcal_l} \mu(P) \int \rho_{P,l}(x) f(x_{P,l},T^{k}x)
    \diff\mu(x) \int g(x_{P,l},x) \diff\mu(x) \\
    + \bigo(e^{-\eta l}) + \bigo \bigl( e^{(6d+14)\eta l} e^{-\tau l}
    \bigr).
  \end{multline*}
  Using \eqref{eq:g:fixedMeas}, we obtain that
  \begin{multline}\label{eq:gremoved}
    \mu(E_k \cap E_{k+l}) \leq \mu(E_{k+l}) \sum_{P\in\pcal_l} \mu(P)
    \int \rho_{P,l}(x) f(x_{P,l},T^{k}x) \diff\mu(x) \\
    + \bigo(e^{-\eta l}) + \bigo(e^{-\eta' l})
    + \bigo \bigl( e^{(6d+14)\eta l} e^{-\tau l} \bigr).
  \end{multline}

  We now move on to separate $f$ from the integral, but we require a
  more suitable partition. By Lemma~\ref{lem:localise} (with
  $h(x,y,z) = f(x,y)$),
  \[
    \sum_{P\in\pcal_l} \mu(P) \int_X \rho_{P,l}(x) f(x_{P,l},T^{k}x)
    \diff\mu(x)
    = \int_X f(x,T^{k}x) \diff\mu(x) + \bigo(e^{-2\eta l}).
  \]
  Using a different partition $\pcal_k$ with $\kappa = 2 \eta$, we
  obtain from Lemma~\ref{lem:localise} and \eqref{eq:f:lip} that
  \[
  \begin{split}
    \int_X f(x,T^{k}x) \diff\mu(x)
    &= \sum_{P\in\pcal_k} \mu(P) \int_X \rho_{P,k}(x) f(x_{P,k},T^{k}x)
    \diff\mu(x) + \bigo( e^{-\eta k} ),
  \end{split}
  \]
  where $x_{P,k} \in P$ is an arbitrary point for all $P \in \pcal_k$.
  By decay of correlations,
  \begin{multline*}
    \sum_{P\in\pcal_k} \mu(P) \int_X \rho_{P,k}(x) f(x_{P,k},T^{k}x)
    \diff\mu(x) = \sum_{P\in\pcal_k} \mu(P) \int_X f(x_{P,k},x)
    \diff\mu(x) \\
    + \bigo( \lipnorm{\rho_{P,k}} \lipnorm{f} e^{-\tau k} ),
  \end{multline*}
  where we have used that
  $\int_X \rho_{P,k} \diff\mu = 1 + \bigo(e^{-\eta k})$.
  By \eqref{eq:rhoLip} and \eqref{eq:f:lip},
  \begin{multline*}
    \int_X f(x,T^{k}x) \diff\mu(x)
    = \sum_{P\in\pcal_k} \mu(P) \int_X f(x_{P,k},x) \diff\mu(x) \\
    + \bigo( e^{-\eta k} ) + \bigo( e^{(4d+9)\eta k} e^{-\tau k} ).
  \end{multline*}
  By \eqref{eq:f:fixedMeas}, we obtain 
  \[
    \int_X f(x,T^{k}x) \diff\mu(x)
    = \mu(E_k) + \bigo(e^{-\eta k}) + \bigo(e^{-\eta' k}) 
    + \bigo( e^{(4d+9)\eta k} e^{-\tau k} ).
  \]

  Combining \eqref{eq:gremoved} with the previous equation and choosing
  \[
    \eta = \frac{\tau}{2(6d+14)}
  \]
  yields
  \[
    \mu(E_k \cap E_{k+l}) \leq \mu(E_k)\mu(E_{k+l})
    + C \mu(E_{k+l}) e^{-\eta_1 k} + C e^{-\eta_1 l},
  \]
  where $\eta_1 = \min \{\eta, \eta', \tau/2\}
  = \min \{\eta, \alpha_0 \eta, \eta_0, \tau/2\}$.
\end{proof}

\section{Applications}
\label{sec:applications}

In this section we apply Theorem~\ref{thm:dynamical} to piecewise
expanding systems on the interval and to linear hyperbolic systems on
the two-dimensional torus. We find that, under the assumptions of
Proposition~\ref{prop:interval} and Proposition~\ref{prop:torus}, the
only remaining part is to prove that short returns have small measure.

Recall that for Propositions~\ref{prop:interval} and \ref{prop:torus},
$(M_k)$ is a monotonically decreasing sequence such that
$M_k \leq C (\log k)^{-2}$ for some $C > 0$.

\subsection{Interval maps preserving a Gibbs measure}
\label{sec:intervalmaps}

Here we consider the case when $T \colon [0,1] \to [0,1]$ is a
piecewise expanding map. We assume that there is a finite partition of
$[0,1]$ into intervals $C_j$, $j = 1,\ldots,n$ and that
$(0,1) \subset T (C_j)$ for each
$j$. We also assume that $T$ restricted to $C_j$ is smooth and
that $3 \leq |T'| \leq D$ for some constant $D \geq 3$.

For any $n \geq 1$, we have that $T^n$ is also piecewise
expanding with respect to a partition consisting of the cylinder
sets
\[
  C_{j_1,\ldots,j_n} = \bigcap_{m = 1}^n T^{-m+1} (C_{j_m}).
\]

We let $\mu$ be a Gibbs measure to a H{\"o}lder continuous potential
$\phi$ and we assume that the pressure of $\phi$ is zero. A consequence
of this is that there is a constant $C > 0$ such that for all $n > 0$ 
\[
  C^{-1} \leq \frac{\mu(C_{j_1,\dotsc,j_k})}{\exp(-S_n\phi(x))} \leq C,
\]
where $S_n\phi = \phi + \phi \circ T + \dotsc + \phi \circ T^{n-1}$ and
$x$ is any point in $C_{j_1,\ldots,j_n}$.

Moreover, it follows from the proof of Proposition~1.14 in \cite{Bowen}
that the measure $\mu$ satisfies exponential `\mbox{$\psi$-mixing}',
and this implies directly that $\mu$ satisfies the quasi-Bernoulli
property, i.e., there is a constant $C > 1$ such that for all $n,m>0$
\begin{equation}
  \label{eq:quasiBernoulli}
  \mu(C_{j_1,\dotsc,j_{n+m}}) \leq C \mu(C_{j_1,\dotsc,j_n})
  \mu(C_{j_{n+1},\dotsc,j_{n+m}}).
\end{equation}
Furthermore, it is proven in \cite{GS} that exponential $\psi$-mixing
implies that the measure of cylinders decay exponentially with their
length, i.e., there are constants $C > 0$ and $\lambda > 1$ such that
for all $n > 0$
\begin{equation}
  \label{eq:cylinderdecay}
  \mu(C_{j_1,\dotsc,j_{n}}) \leq C \lambda^{-n}.
\end{equation}

We see directly that Assumption~\ref{assumption:frostman} and
Assumption~\ref{assumption:thinAnnuli} are satisfied from the above
properties. Furthermore, it is established in \cite{LSV} that these
systems satisfy decay of correlation for $L^{1}$ versus $BV$
observables, that is, there exists $C, \tau > 0$ such that for all
$k \in \naturals$ and for all functions
$\varphi_1, \varphi_2 \colon [0,1] \to \reals$ with $\varphi_1 \in BV$
and $\varphi_2 \in L^{1}$,
\begin{equation}\label{eq:doc:interval}
  \biggl| \int_{[0,1]} \varphi_1 \varphi_2 \circ T^{k} \diff\mu -
  \int_{[0,1]} \varphi_1 \diff\mu \int_{[0,1]} \varphi_2 \diff\mu
  \biggr| \leq C \bvnorm{\varphi_1} \lnorm{\varphi_2}{1} e^{-\tau k}.
\end{equation}
By iterating \eqref{eq:doc:interval}, we obtain that
\eqref{eq:doc:total} in Assumption~\ref{assumption:doc} holds for
$\varphi_1, \varphi_2 \in BV$ and $\varphi_3 \in L^{1}$.
By the assumptions on $T$, we have that \eqref{eq:doc:partial} holds
for $\sigma < \tau / \log (\bvnorm{T})$ and Lipschitz continuous
$\varphi_1, \varphi_2$.
Indeed, by the above assumptions, we have that
$\bvnorm{T^{k}} \leq \bvnorm{T}^{k}$. Hence, for a Lipschitz continuous
observable $\varphi_2$, we have that $\varphi_2 \circ T^{k} \in BV$ and
that
$\bvnorm{\varphi_2 \circ T^{k}}
\leq \lipnorm{\varphi_2} \bvnorm{T}^{k}$.
Thus for $k \leq \sigma l$,
\[
\begin{split}
  \biggl| \int_{[0,1]} \varphi_1 \varphi_2 \circ T^{k} \varphi_3 \circ
  T^{k+l} \diff\mu\ -& \int_{[0,1]} \varphi_1 \varphi_2 \circ T^{k}
  \diff\mu \int_{[0,1]} \varphi_3 \diff\mu \biggr| \\
  &\leq C \bvnorm{\varphi_1 \varphi_2 \circ T^{k}} \lnorm{\varphi_3}{1}
  e^{-\tau (k + l)} \\
  &\leq C \bvnorm{\varphi_1} \bvnorm{\varphi_2} \lnorm{\varphi_3}{1}
  e^{k(\log \bvnorm{T})} e^{-\tau l} \\
  &\leq C \lipnorm{\varphi_1} \lipnorm{\varphi_2} \lipnorm{\varphi_3}
  e^{-\tau' l},
\end{split}
\]
where $\tau' = \tau - \sigma \log (\bvnorm{T})$.
By replacing $\tau$ by $\tau'$, we obtain \eqref{eq:doc:partial}.

Therefore, the conclusions of Lemma~\ref{lem:estimates:general} and
Lemma~\ref{lem:largel} still hold.
We point out that as a consequence of
Lemma~\ref{lem:estimates:general}, there exists $C > 0$ such that
\begin{equation}
  \label{eq:muEk-onedim}
  C^{-1} M_k \leq \mu(E_k) \leq C M_k
\end{equation}
since we may assume without loss of generality that
$M_k \geq C e^{-\tau k}$.

Having established that every other assumption holds, we only need to
verify that Assumption~\ref{assumption:srt} holds in order
to apply Theorem~\ref{thm:dynamical}.

\begin{lemma}
  \label{lem:srt:interval}
  Under the above assumptions on $([0,1], T, \mu)$, there exists
  constants $C > 0$ and $\lambda > 1$ such that
  \[
    \mu (E_k \cap E_{k+l}) \leq C \mu (E_k) \mu (E_{k+l}) + C
    \mu(E_{k+l}) \lambda^{-l}.
  \]
\end{lemma}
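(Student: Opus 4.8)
The plan is to exploit the cylinder structure of the interval map together with the quasi-Bernoulli property \eqref{eq:quasiBernoulli} and exponential cylinder decay \eqref{eq:cylinderdecay}. First I would reduce $E_k$ to a union of cylinders. Since $r_k(x)$ is Lipschitz and $M_k$ is comparable to the measure of a cylinder of length roughly $\log_\lambda(1/M_k)$, one sees that for a point $x$ lying in a cylinder $C = C_{j_1,\dots,j_p}$ of generation $p \asymp \log(1/M_k)$, the ball $B(x,r_k(x))$ is, up to controlled boundary effects governed by Assumption~\ref{assumption:thinAnnuli}, well approximated by that cylinder. Thus $E_k$ is, up to a set of measure $\bigo(e^{-\eta_0 k}) = \bigo(M_k^2)$-ish (using $M_k \geq Ce^{-\tau k}$ and \eqref{eq:muEk-onedim}), contained in $\{x : T^k x \in C(x)\}$ where $C(x)$ is the generation-$p$ cylinder containing $x$. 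The condition $T^k x \in C(x)$ for $x \in C_{j_1,\dots}$ forces the first $p$ symbols of the itinerary of $T^k x$ to equal $j_1,\dots,j_p$, i.e.\ $x$ lies in a cylinder $C_{j_1,\dots,j_k,j_1,\dots,j_p}$ whose symbol string repeats its initial block at position $k$.

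The core estimate is then combinatorial. Writing $E_k$ (up to the small error) as a disjoint union $\bigsqcup_{|w|=p} C_{w\,\cdot\,v(w)\,\cdot\, w}$ over words $w$ of length $p$, where the middle block $v(w)$ has length $k-p$, I would use the quasi-Bernoulli property repeatedly: $\mu(C_{w v w}) \leq C \mu(C_w)\mu(C_{v w}) \leq C^2 \mu(C_w)^2 \mu(C_v) \approx C^2 M_k \mu(C_w)$, and summing over the middle blocks $v$ and then over $w$ gives $\mu(E_k) \lesssim M_k$ with the reverse bound from \eqref{eq:muEk-onedim}. For the joint event $E_k \cap E_{k+l}$, a point must satisfy two return conditions simultaneously; splitting into the cases $l$ small and $l$ large relative to $p$ (the cylinder depth), one decomposes the relevant symbol string into blocks and applies quasi-Bernoulli to factorise the measure. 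When $l \geq p$ the two "return windows" do not overlap, and quasi-Bernoulli yields $\mu(E_k \cap E_{k+l}) \leq C\mu(E_k)\mu(E_{k+l})$ directly. When $l < p$ the windows overlap, but then $\mu(E_{k+l}) \leq C\lambda^{-(k+l)}$ is tiny by \eqref{eq:cylinderdecay} applied to the depth-$(p')$ cylinder with $p' \asymp \log(1/M_{k+l})$, while $\lambda^{-l}$ absorbs the overlap discrepancy; this is where the $C\mu(E_{k+l})\lambda^{-l}$ term comes from.

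The main obstacle I expect is handling the boundary of the balls carefully enough that the cylinder approximation is honest: $B(x,r_k(x))$ is not literally a cylinder, so one must show that the symmetric difference between $E_k$ and its cylinder approximation contributes only an error of the claimed form (a multiple of $\mu(E_k)\mu(E_{k+l})$ plus $\mu(E_{k+l})\lambda^{-l}$, not something like $\lambda^{-l}$ alone). This is exactly the role of Assumption~\ref{assumption:thinAnnuli} and the estimate \eqref{eq:Ekmeasure:general}, and it should be dispatched by the same annulus-thickening device used in the proof of Lemma~\ref{lem:largel} — replacing $\charfun_{B(x,r_k(x))}$ by $\charfun_{B(x,r_k(x)+\varepsilon)}$ and $\charfun_{B(x,r_k(x)-\varepsilon)}$ for a suitable $\varepsilon$ comparable to a cylinder diameter at the relevant scale — but the bookkeeping, keeping every error proportional to $\mu(E_{k+l})$ rather than a bare exponential, is the delicate part. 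Once the cylinder reduction is in place, the quasi-Bernoulli estimates are routine, and the stated inequality follows.
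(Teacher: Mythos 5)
Your combinatorial skeleton (nested sub-cylinders producing the $\mu(E_k)\mu(E_{k+l})$ term; quasi-Bernoulli plus cylinder decay producing the $\lambda^{-l}$ term) matches the shape of the paper's argument, but the step you yourself flag as delicate --- replacing $B(x,r_k(x))$ by the generation-$p$ cylinder containing $x$ --- is a genuine gap, and Assumption~\ref{assumption:thinAnnuli} does not fill it. Thin annuli controls $\mu\bigl(B(x,r+\varepsilon)\setminus B(x,r)\bigr)$, i.e.\ annuli around \emph{balls}; it says nothing about how a ball sits relative to the \emph{cylinder} partition. The set of $x$ lying within distance $r_k(x)$ of the boundary of its generation-$p$ cylinder has measure of order $N_p M_k$, where $N_p$ is the number of generation-$p$ cylinders; for $p\asymp\log(1/M_k)$ this is far from negligible, so the symmetric difference between $E_k$ and your cylinder version is not an admissible error. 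Moreover, for a Gibbs measure the generation-$p$ cylinders do not all have measure comparable to $M_k$ for a single $p$, so your repeated use of $\mu(C_w)\approx M_k$ would require a stopping-time construction you do not supply; and \eqref{eq:Ekmeasure:general} compares $\mu(E_k)$ to $M_k$, it does not bound any symmetric difference. Finally, in your overlapping-window case the claim $\mu(E_{k+l})\le C\lambda^{-(k+l)}$ is false: $\mu(E_{k+l})\asymp M_{k+l}$, which may be as large as $C(\log (k+l))^{-2}$, so that branch of the case analysis does not close as written.

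The paper sidesteps the ball-versus-cylinder issue entirely. Because every branch of $T^k$ is full, $E_k\cap C_{j_1,\dots,j_k}$ is \emph{exactly} an interval $I_{j_1,\dots,j_k}$ containing the unique $k$-periodic point of that cylinder; no approximation is needed. The key quantitative input is $\mu(I_{j_1,\dots,j_k})\le CM_k\,\mu(C_{j_1,\dots,j_k})$, obtained by first showing geometrically that $T^kI_{j_1,\dots,j_k}$ is covered by three balls of measure $M_k$ (this is where $|T'|\ge 3$ and $[r_k]_1=1$ enter) and then pulling back with the conformal measure and the Gibbs property. With that estimate, the decomposition of $I_{j_1,\dots,j_k}\cap E_{k+l}$ into fully contained sub-cylinders plus two boundary pieces, together with \eqref{eq:quasiBernoulli} and \eqref{eq:cylinderdecay}, yields the lemma. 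If you want to rescue your route, you would need to prove an analogue of $\mu(I_{j_1,\dots,j_k})\le CM_k\,\mu(C_{j_1,\dots,j_k})$; the symbolic repetition picture alone does not produce it.
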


\begin{proof}
  We start by describing the geometric structure of the set
  $E_k$. For each cylinder $C_{j_1,\ldots,j_k}$ there is an
  interval $I_{j_1,\ldots,j_k} \subset C_{j_1,\ldots,j_k}$ such
  that $I_{j_1,\ldots,j_k} = E_k \cap C_{j_1,\ldots,j_k}$. The
  interval $I_{j_1,\ldots,j_k}$ contains exactly one point of
  period $k$. We shall first prove that
  \begin{equation}
    \label{eq:3M}
    \mu (T^k I_{j_1,\ldots,j_k}) \leq 3 M_k.
  \end{equation}

  Let $I_{j_1,\ldots,j_k} = (a,b)$ and suppose that $p \in (a,b)$
  is the above mentioned periodic point of period $k$. We prove
  that
  \begin{equation}
    \label{eq:threelittleballs}
    T^k I_{j_1,\ldots,j_k} \subset B (a, r_k (a))
    \cup B (p, r_k (p)) \cup B (b, r_k (b)),
  \end{equation}
  which implies that $\mu (T^k (a,b)) \leq 3 M_k$, since by
  definition, the measure of each of the three balls is $M_k$.

  We consider the endpoint $a$. All following estimates are also
  valid for the other endpoint $b$. Since $a$ is an end-point of
  $(a,b)$ we have that $d (T^k a, a) = r_k (a)$.

  Suppose first that $T' > 0$ on $(a,b)$. Then $T^{k}a < a < p$ and
  \begin{equation}
    \label{eq:distance}
    d (T^k a, p) = d (T^k a, a) + d (a,p) = r_k (a) + d(a,p)
  \end{equation}
  and
  \[
    d (T^k a, p) = |(T^k)' (\xi)| d (a,p),
  \]
  for some point $\xi$ between $a$ and $p$. A consequence is that
  \[
    (|(T^k)' (\xi)| - 1) d (a,p) = r_k (a) \leq r_k (p) + d(a,p),
  \]
  since $r_k$ is 1-Lipschitz. Hence
  \[
    (|(T^k)' (\xi)| - 2) d (a,p) \leq r_k (p),
  \]
  and since $|(T^k)' (\xi)| \geq 3$ we conclude that
  $d (a,p) \leq r_k (p)$. By the equality \eqref{eq:distance} we
  therefore have that
  \[
    d (T^k a, p) \leq r_k (a) + r_k(p),
  \]
  \begin{figure}[ht]
    \centering
    \includegraphics{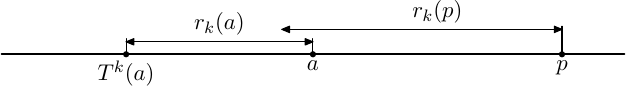}
    \caption{The position of $T^{k}a$ relative to $a$ and $p$ when
    $T' > 0$.}
    \label{fig:balls:1}
  \end{figure}
  which implies that the interval between $T^k a$ and $p$ is
  contained in the two balls $B (a, r_k(a))$ and $B (p, r_k (p))$ (see
  Figure~\ref{fig:balls:1}).
  Similarly, we have that the interval between $T^k (b)$ and $p$ is
  contained in $B (b, r_k(b))$ and $B (p, r_k (p))$. This establishes
  \eqref{eq:threelittleballs} and hence also \eqref{eq:3M}.

  Suppose now that $T' < 0$ on $[a,b]$. Then $a < p < T^{k}a$. It must
  be the case that $r_k(p) > d(p,T^{k}a)$ since otherwise $B(p,r_k(p))$ 
  is a subset of $B(a,r_k(a))$ contradicting the definition of $r_k$.
  This implies that $(p,T^{k}a)$ is contained in $B(p,r_k(p))$.
  Similarly $(T^{k}b,p)$ is also contained in $B(p,r_k(p))$. Thus,
  $T^{k}(a,b)$ is contained in $B(p,r_k(p))$ and that 
  $\mu(T^{k}(a,b)) \leq M_k < 3M_k$ (see Figure~\ref{fig:balls:2}).

  \begin{figure}[ht]
    \centering
    \includegraphics{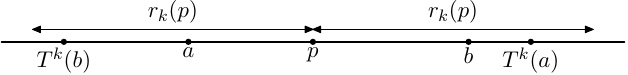}
    \caption{The position of $T^{k}a$ and $T^{k}b$ relative to $a$, $b$
    and $p$ when $T' < 0$.}
    \label{fig:balls:2}
  \end{figure}

  The measure $\mu$ has a density $h$ with respect to a conformal
  measure $\nu$. The density $h$ is of bounded variation, and is
  therefore bounded, and $h$ is also bounded away from zero. That
  $\nu$ is conformal means that if $A \subset C_{j_1,\ldots,j_k}$, then
  \[
    \nu (T^k A) = \int_A e^{S_k \phi} \diff \nu.
  \]
  Since
  \[
  \begin{split}
    M_k &\geqs \mu(T^{k}I_{j_1,\dotsc,j_k}) \\
    &\geqs \nu(T^{k}I_{j_1,\dotsc,j_k}) \\
    &\geqs \nu(T^{k}I_{j_1,\dotsc,j_k}) \\
    &\geqs \int_{I_{j_1,\dotsc,j_k}} e^{S_k\phi} \diff\nu \\
    &\geqs \mu(C_{j_1,\dotsc,j_k})^{-1} \nu(I_{j_1,\dotsc,j_k}) \\
    &\geqs \mu(C_{j_1,\dotsc,j_k})^{-1} \mu(I_{j_1,\dotsc,j_k}),
  \end{split}
  \]
  we have
  \begin{equation}
    \label{eq:measureincylinder}
    \mu (I_{j_1,\ldots,j_k}) \leq C M_k \mu(C_{j_1,\ldots,j_k}).
  \end{equation}
  
  We now consider the intersection $E_k \cap E_{k+l}$. Consider a
  cylinder $C_{j_1,\ldots,j_k}$ and its subinterval
  $I_{j_1,\ldots,j_k}$. We write the intersection
  $I_{j_1,\ldots,j_k} \cap E_{k+l}$ as
  \[
    I_{j_1,\ldots,j_k} \cap E_{k+l} = K_1 \cup K_2 \cup \bigcup
    J_j,
  \]
  where the sets $J_j$ are those $I_{j_1,\ldots,j_{k+l}}$ for
  which their parent cylinder $C_{j_1,\ldots,j_{k+l}}$ is a
  subset of $I_{j_1,\ldots,j_k}$. The sets $K_1$ and $K_2$ consist of
  the remaining parts that are not included in the sets $J_j$ and may
  be empty.

  By \eqref{eq:measureincylinder}, we have
  \[
    \mu \Bigl( \bigcup J_j \Bigr) \leq C M_{k+l} \sum \mu
    (C_{j_1,\ldots,j_{k+l}}) \leq C M_{k+l} \mu
    (I_{j_1,\ldots,j_k}).
  \]
  Using \eqref{eq:quasiBernoulli} and \eqref{eq:cylinderdecay}, we
  have
  \[
  \begin{split}
    \mu (K_1) &\leq M_{k+l} \mu(C_{j_1,\dotsc,j_{k+l}}) \\
    &\leq C M_{k+l} \mu(C_{j_1,\dotsc,j_{k}})
    \mu(C_{j_{k+1},\dotsc,j_{k+l}}) \\
    &\leq C M_{k+l} \lambda^{-l} \mu(C_{j_1,\ldots,j_k}),
  \end{split}
  \]
  and similarly for $K_2$.

  Summing over $I_{j_1,\ldots,j_k}$ we obtain that
  \[
    \mu (E_k \cap E_{k+l}) \leq C \mu (E_k) M_{k+l} + C M_{k+l}
    \lambda^{-l}.
  \]
  By \eqref{eq:muEk-onedim} this implies that
  \[
    \mu (E_k \cap E_{k+l}) \leq C \mu (E_k) \mu (E_{k+l}) + C
    \mu(E_{k+l}) \lambda^{-l}. \qedhere
  \]
\end{proof}

We now prove Proposition~\ref{prop:interval}.
Recall that we assume that $(M_k)$ is a monotonically decreasing
sequence converging to $0$ such that
$M_k \leq C (\log k)^{-2}$ for some $C > 0$.

\begin{proof}[Proof of Proposition~\ref{prop:interval}]
  Assume first that $\sum_{k=1}^\infty M_k < \infty$. In this
  case it is known \cite[Theorem~C]{KKP} that
  $\mu (\limsup E_k) = 0$. The statement of
  Proposition~\ref{prop:interval} is therefore true in this case.

  We now assume that $\sum_{k=1}^\infty M_k = \infty$.  As
  previously mentioned, Assumptions~\ref{assumption:doc},
  \ref{assumption:frostman} and \ref{assumption:thinAnnuli} are
  satisfied.  Furthermore, since $(M_k)$ is monotonically
  decreasing, using \eqref{eq:Ekmeasure:general} yields that
  $\mu(E_{k+l}) \leq C \mu(E_k)$ for some $C > 0$.  As a result,
  we obtain from Lemma~\ref{lem:srt:interval} that
  \[
    \mu(E_k \cap E_{k+l}) \leq C \mu(E_k)^2 + C \mu(E_k) \lambda^{-l},
  \]
  which is exactly the condition in Assumption~\ref{assumption:srt}
  with $N = 0$, $\delta = 1$ and $\psi(l) = \lambda^{-l}$. Thus,
  $M_k \leq C (\log k)^{-2} = C (\log k)^{-(2+N)/\delta}$ and we find
  that that all the assumptions of Theorem~\ref{thm:dynamical} are
  satisfied. The desired result therefore follows.
\end{proof}

\subsection{Linear Anosov maps on \texorpdfstring{$\torus^2$}{T2}
preserving the Lebesgue measure}
\label{sec:anosov}

In this subsection, we let $A$ be a $2 \times 2$ integer matrix with
$\lvert\det A\rvert = 1$ and no eigenvalues on the unit circle. We
define $T \colon \mathbbm{T}^2 \to \mathbbm{T}^2$ by $Tx = Ax \mod 1$
and let $\leb$ denote the Lebesgue measure, which is ergodic.
We let $\lambda$ denote the eigenvalue of $A$ outside the unit circle.

It is established in \cite{KS} that \eqref{eq:doc:total} in
Assumption~\ref{assumption:doc} holds for Anosov diffeomorphisms. In
fact, since $T$ is Lipschitz continuous, \eqref{eq:doc:total} implies
\eqref{eq:doc:partial} for $\sigma < \tau / \log(\lipnorm{T})$. Indeed,
for $k \leq \sigma l$, we have by decay of correlations that
\[
\begin{split}
  \biggl| \int_{X} \tilde{\varphi}_1 \tilde{\varphi}_2 \circ T^{k}
  \tilde{\varphi}_3 \circ T^{k+l} \diff\mu &- \int_{X} \tilde{\varphi}_1
  \tilde{\varphi}_2 \circ T^{k} \diff\mu \int_{X} \tilde{\varphi}_3
  \diff\mu \biggr| \\
  &\leq C \lipnorm{\tilde{\varphi}_1 \tilde{\varphi}_2 \circ T^{k}}
  \lipnorm{\tilde{\varphi}_3} e^{-\tau (k+l)} \\
  &\leq C \lipnorm{\tilde{\varphi}_1} \lipnorm{\tilde{\varphi}_2}
  \lipnorm{\tilde{\varphi}_3} e^{k(\log\lipnorm{T})} e^{-\tau l} \\
  &\leq C \lipnorm{\tilde{\varphi}_1} \lipnorm{\tilde{\varphi}_2}
  \lipnorm{\tilde{\varphi}_3} e^{-\tau'l}
\end{split}
\]
where $\tau' = \tau - \sigma \log(\lipnorm{T})$.
Replacing $\tau$ with $\tau'$ yields \eqref{eq:doc:partial}.

It is clear that Assumptions~\ref{assumption:frostman} and
\ref{assumption:thinAnnuli} hold for $\leb$.
Thus, in order to apply Theorem~\ref{thm:dynamical} to prove
Proposition~\ref{prop:torus}, we only need to verify that
Assumption~\ref{assumption:srt} holds, which follows from
Lemma~\ref{lem:srt:torus}.
The following lemma will be used to prove Lemma~\ref{lem:srt:torus}.

\begin{lemma}
  \label{lem:separation}
  There is a constant $c_A > 0$ such that for any $l$ and any
  $\rho > 0$, the rectangles centred at $l$-periodic points and
  with side-length $\rho$ in the stable direction and
  $c_A \lambda^{-l} \rho^{-1}$ in the unstable direction are
  pairwise disjoint.
\end{lemma}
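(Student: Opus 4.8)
The plan is to exploit the linear structure of $T$ to control how close two distinct $l$-periodic points can be. Let me recall the geometry: since $A$ has eigenvalues $\lambda$ and $\pm\lambda^{-1}$ (with $|\det A|=1$ forcing the product to be $\pm 1$), there is a basis of $\mathbb{R}^2$ consisting of an unstable direction $e^u$ and a stable direction $e^s$, along which $T$ expands by $|\lambda|$ and contracts by $|\lambda|^{-1}$ respectively. An $l$-periodic point $p$ for $T$ on $\torus^2$ lifts to a point $\tilde p \in \mathbb{R}^2$ with $A^l \tilde p - \tilde p \in \mathbb{Z}^2$, i.e.\ $(A^l - I)\tilde p \in \mathbb{Z}^2$. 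Two distinct periodic points $p\ne q$ therefore satisfy $(A^l - I)(\tilde p - \tilde q) \in \mathbb{Z}^2 \setminus \{0\}$ for any choice of lifts with $\tilde p - \tilde q$ not already in $\mathbb{Z}^2$.

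First I would write $v = \tilde p - \tilde q = v^u e^u + v^s e^s$ in the eigenbasis. Applying $A^l - I$ scales the unstable component by $\lambda^l - 1$ and the stable component by $\lambda^{-l}-1$ (up to signs), so $(A^l-I)v$ has unstable coordinate of size $\asymp \lambda^l |v^u|$ and stable coordinate of size $\asymp |v^s|$. Since $(A^l - I)v$ is a nonzero integer vector, its Euclidean norm is $\geq c$ for a constant $c$ depending only on the (fixed) change-of-basis matrix; more precisely, writing it in the eigenbasis, we get a lower bound $\lambda^l |v^u| + |v^s| \gtrsim 1$. This is not yet what we want — we need a bound on the \emph{product} $|v^u| \cdot |v^s|$, or rather a statement that two periodic points cannot both lie within a thin rectangle. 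The key observation is the following: if $p$ and $q$ both lie in a rectangle $R$ centred at a periodic point with stable side-length $\rho$ and unstable side-length $h$, then $|v^s| \leq \rho$ and $|v^u| \leq h$, whence the nonzero integer vector $(A^l - I)v$ has unstable coordinate bounded by $C\lambda^l h$ and stable coordinate bounded by $C\rho$. For this integer vector to be nonzero while both its coordinates (in the eigenbasis, then transported to the standard basis) are small, we need either $\lambda^l h \gtrsim 1$ or $\rho \gtrsim 1$; since $\rho$ is a free parameter that may be small, the binding constraint is $h \gtrsim \lambda^{-l}\rho^{-1}$ — wait, that is the wrong direction.

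Let me reorganise. The correct mechanism: a nonzero integer vector $w$ whose \emph{stable} eigencoordinate is $\leq C\rho$ and whose \emph{unstable} eigencoordinate is $\leq C\lambda^l h$ must have Euclidean norm $\geq c_0 > 0$ (integer vectors are $c_0$-separated from $0$), but that only forces $\max\{C\rho,\, C\lambda^l h\} \gtrsim 1$. Instead, the sharp tool is that the standard basis vectors have \emph{irrational slope} relative to $e^u, e^s$, so a nonzero integer vector cannot have a \emph{very small} stable component unless its unstable component is \emph{correspondingly large}: indeed $|w^s| \cdot |w^u| \gtrsim 1$ is \emph{false} in general, but one does have that if $w \in \mathbb{Z}^2\setminus\{0\}$ then $|w^s| \geq c_0 / |w^u|$ whenever... — actually the clean statement, which I would prove via $|w^u w^s| = |\det(\text{coords})| \cdot \text{(something)}$, is: the area form gives $|w^u||w^s| \geq c_1 |w \wedge w'|$ for integer $w, w'$, but for a single vector this degenerates. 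So the right route is: the nonzero integer vector $w = (A^l-I)v$ satisfies $\|w\|_{\mathbb{R}^2} \geq c_0$; transporting back, $|v^u| \gtrsim \lambda^{-l}(c_0 - C|v^s|)$, so once $|v^s| \leq \rho$ is small enough, $|v^u| \gtrsim \lambda^{-l}$, giving that two periodic points in a common thin vertical strip of stable-width $\rho$ are separated by $\gtrsim \lambda^{-l}$ in the unstable direction — independently of $\rho$. To recover the sharp $\rho^{-1}$ gain one instead keeps $|v^s|$ as a variable: $w$ nonzero integer with $|w^s| = |(\lambda^{-l}-1)v^s| \leq \rho$ and $|w^u| = |(\lambda^l - 1)v^u|$, and now use that an integer vector with stable component $\leq \rho$ has unstable component $\gtrsim 1$ (not $\rho^{-1}$!) — so in fact $|v^u| \gtrsim \lambda^{-l}$, and the claimed $c_A\lambda^{-l}\rho^{-1}$ is an \emph{upper} bound on the admissible unstable side, consistent with disjointness as long as we do not exceed it. I would therefore conclude: set $c_A$ so that $c_A\lambda^{-l}\rho^{-1} \leq$ (half the guaranteed unstable separation) whenever the rectangles would otherwise overlap; chasing the constants, disjointness holds precisely because $|v^u| |v^s| \cdot \lambda^l \gtrsim$ (norm of nonzero integer vector) $\gtrsim 1$ after noting the two eigencoordinates of $w$ multiply to $\gtrsim |v^u||v^s|\lambda^l$ up to the fixed constant $|\lambda^{2l}-2\cos\theta\lambda^l+1|/\lambda^l \asymp \lambda^l$.

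The main obstacle is precisely this last point: turning the "nonzero integer vector" fact into a bound on the \emph{product} of the two eigencoordinates rather than their max. The resolution I would use is that the map $v \mapsto (A^l - I)v$, expressed in the eigenbasis, is diagonal with entries $\lambda^l - 1$ and $\lambda^{-l}-1$ whose \emph{product} is $\det(A^l - I) = \pm(\lambda^l + \lambda^{-l}) \mp 2$ — a fixed nonzero integer $D_l$ with $|D_l| \geq 1$. Hence $|w^u||w^s| = |D_l|\,|v^u||v^s| \geq |v^u||v^s|$. On the other hand $w$ is a nonzero integer vector, so writing it in the standard basis and using that $e^u, e^s$ span with a fixed nondegenerate matrix $S$, we get $|w^u||w^s| \geq c_1 \|w\|_1 \cdot \|w\|_\infty \cdot(\ldots)$ — cleaner: $|w^u w^s|$ is comparable to $|\,\text{(first coord)}\cdot\text{(second coord)}\,|$ only up to slope factors, and an integer vector can have one small coordinate. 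So ultimately the honest statement is $|v^u| \cdot \max\{|v^s|, \lambda^{-l}\} \gtrsim \lambda^{-l}$, i.e.\ if $|v^s| \leq \lambda^{-l}$ then $|v^u| \gtrsim \lambda^{-l}$ and if $|v^s| \geq \lambda^{-l}$ then $|v^u| \gtrsim \lambda^{-l}/(|v^s|\lambda^l) = \lambda^{-2l}/|v^s|$; packaging both cases and taking $\rho \leq$ const gives the rectangle with stable side $\rho$ and unstable side $c_A\lambda^{-l}\rho^{-1}$ cannot contain two periodic points, which is the claim. I expect the bulk of the writeup to be bookkeeping the constant $c_A$ through the fixed change of basis $S$ and checking the boundary case $\rho$ not small; the periodicity-to-integer-vector reduction and the determinant identity are the two clean ideas.
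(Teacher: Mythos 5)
Your reduction is the same as the paper's: lift the $l$-periodic points to the lattice $(A^l-I)^{-1}\mathbb{Z}^2$, so that the difference $v$ of two distinct periodic points satisfies $w=(A^l-I)v\in\mathbb{Z}^2\setminus\{0\}$ with, in eigencoordinates, $|w^u|=|\lambda^l-1|\,|v^u|\asymp\lambda^l|v^u|$ and $|w^s|=|\lambda^{-l}-1|\,|v^s|\asymp|v^s|$. Two of the rectangles intersect only if $|v^s|\leq\rho$ and $|v^u|\leq c_A\lambda^{-l}\rho^{-1}$, i.e.\ only if $|v^u|\,|v^s|\leq c_A\lambda^{-l}$; so what the lemma requires is exactly the product bound $|v^u|\,|v^s|\geq c\,\lambda^{-l}$, equivalently $|w^u|\,|w^s|\geq c_0>0$ for every nonzero integer vector $w$. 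This is precisely the step you talk yourself out of: you assert that ``$|w^s|\cdot|w^u|\gtrsim 1$ is false in general'' and that ``an integer vector can have one small coordinate,'' and you retreat to the trivial bound $\norm{w}\geq 1$. That fallback is genuinely insufficient: it yields only $|v^u|\cdot\max\{|v^s|,\lambda^{-l}\}\gtrsim\lambda^{-l}$, hence $|v^u|\,|v^s|\gtrsim\lambda^{-2l}$, which is short of what is needed by a factor of $\lambda^l$. Concretely, when $\rho\leq\lambda^{-l}$ the rectangles have unstable side $c_A\lambda^{-l}\rho^{-1}\geq c_A$, a constant, while your estimate separates periodic points lying in a common stable window of width $\rho$ by only $\gtrsim\lambda^{-l}$ in the unstable direction, so disjointness does not follow.

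The missing ingredient is Diophantine, and it is what the paper's invocation of Liouville's theorem supplies: the stable and unstable directions of a hyperbolic integer matrix have slopes that are quadratic irrationals, hence badly approximable (Liouville for algebraic numbers of degree two gives $|\alpha-p/q|\geq c/q^2$, i.e.\ $|q|\,|q\alpha-p|\geq c$), and rewritten in eigencoordinates this is exactly $|w^u|\,|w^s|\geq c_0$ for all $w\in\mathbb{Z}^2\setminus\{0\}$. Your worry that a single irrational slope does not force such a bound is correct for Liouville numbers but irrelevant here, since the slopes are algebraic of degree two. With that input, the determinant computation you already carried out gives $|v^u|\,|v^s|=|w^u w^s|/\bigl(|\lambda^l-1|\,|\lambda^{-l}-1|\bigr)\geq c\,\lambda^{-l}$, and taking $c_A<c$ yields the claimed disjointness. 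The paper packages the same argument slightly differently, mapping the rectangles forward by $A^l-I$ to rectangles of area $\asymp c_A$ about distinct integer points and then applying Liouville, but the substance is identical; it is only this badly-approximable product bound that your proposal lacks.
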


\begin{proof}
  We let $\pi \colon \mathbbm{R}^2 \to \mathbbm{T}^2$ be the
  projection $\pi (x) = x \mod 1$. The lattice $L = (A^l -
  I)^{-1} \mathbbm{Z}^2$ is the lattice such that $\pi (L)$ is the
  set of $l$-periodic points, that is $p \in \pi (L)$ if and only
  if $T^l p = p$.

  Around each $P \in L$, we place a rectangle $R(P)$ with
  side-length $\rho$ in the stable direction and
  $c_A \lambda^{-l} \rho^{-1}$ in the unstable direction. Mapping
  every rectangle by $t(x) = (A^l - I)x$, we obtain rectangles
  $t(R(P))$ centred at points of $\mathbbm{Z}^2$ and with
  side-lengths $|\lambda^{-l} - 1| \rho$ and $c_A |\lambda^{l} - 1|
  \lambda^{-l} \rho^{-1}$ in the unstable direction. By
  Liouville's theorem in Diophantine approximation, we conclude
  that if $c_A$ is chosen small enough but positive, then
  $t(R(P)) \cap t(R(Q)) = \emptyset$ if $P \neq Q$. Since $t$ is
  invertible it follows that $R(P) \cap R(Q) = \emptyset$ if $P
  \neq Q$.

  This proves that two rectangles $\pi (R(P))$ and $\pi (R(Q))$
  on the torus $\mathbbm{T}^2$ are either disjoint or one and the
  same rectangle.
\end{proof}

\begin{lemma}
  \label{lem:srt:torus}
  There are constants $C > 0$ and $\lambda > 1$ such that
  \begin{equation}
    \label{eq:Ekmeasure:torus}
    \leb (E_k) = \pi r_k^2
  \end{equation}
  and
  \begin{equation}
    \label{eq:Ekquasiindependence:torus}
    \leb (E_k \cap E_{k+l}) \leq C \leb (E_k) \leb (E_{k+l}) + C
    \lambda^{-l} \sqrt{\leb(E_k) \leb (E_{k+l})}.
  \end{equation}
\end{lemma}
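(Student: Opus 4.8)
The plan is to prove the two assertions of the lemma separately, beginning with \eqref{eq:Ekmeasure:torus}. The key observation is that the map $\bar\phi_k \colon \torus^2 \to \torus^2$, $\bar\phi_k(x) = (A^k - I)x \bmod 1$, is well defined, since $A^k - I$ is an integer matrix, and preserves Lebesgue measure, since any endomorphism of the torus induced by an integer matrix is an $\lvert\det(A^k - I)\rvert$-fold covering pushing $\leb$ forward to $\leb$. (The matrix $A^k - I$ is invertible because $A$, hence $A^k$, has no eigenvalue equal to $1$.) As $T^k x = A^k x \bmod 1$ and the torus metric is translation invariant, $x \in E_k$ exactly when $\lVert (A^k - I)x\rVert_{\torus} < r_k$, that is, $E_k = \bar\phi_k^{-1}(B(0,r_k))$. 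Since $r_k \to 0$, for $k$ large $B(0,r_k)$ is an ordinary Euclidean disk of area $\pi r_k^2$, so $\leb(E_k) = \leb(B(0,r_k)) = \pi r_k^2$.

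For \eqref{eq:Ekquasiindependence:torus} I would first describe the geometry of $E_k$. Because $A^k - I$ is an integer matrix of nonzero determinant, for $k$ large $E_k = \bar\phi_k^{-1}(B(0,r_k))$ is a disjoint union of $\lvert\det(A^k-I)\rvert$ components, one around each $k$-periodic point $p$ (the points of $\bar\phi_k^{-1}(0)$), each equal to $p + (A^k-I)^{-1}B(0,r_k)$; disjointness of these components for large $k$ follows from Lemma~\ref{lem:separation}. Now $A^k - I$ has eigenvalue $\lambda^k - 1 \asymp \lambda^k$ on the unstable eigenline of $A$ and eigenvalue $\mu^k - 1 \asymp 1$ on the stable eigenline (where $\mu = \pm\lambda^{-1}$ is the other eigenvalue and the two eigenlines meet at a fixed angle), so each component is comparable to a parallelogram with side $\asymp r_k$ along the stable direction and side $\asymp r_k\lambda^{-k}$ along the unstable direction; in particular $\lvert\det(A^k - I)\rvert \asymp \lambda^k$ and each component has measure $\asymp r_k^2\lambda^{-k}$. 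The analogous description holds for $E_{k+l}$ with $k$ replaced by $k+l$.

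Next, fix a component $U_p$ of $E_k$ and bound the number $N_p$ of components $V_q$ of $E_{k+l}$ that meet it. Any such $q$ lies in a parallelogram around $p$ of stable side $\lesssim r_k + r_{k+l}$ and unstable side $\lesssim r_k\lambda^{-k} + r_{k+l}\lambda^{-(k+l)} \lesssim (r_k+r_{k+l})\lambda^{-k}$. Applying Lemma~\ref{lem:separation} with period $k+l$ and $\rho = r_k + r_{k+l}$, the $(k+l)$-periodic points carry pairwise disjoint rectangles of area $c_A\lambda^{-(k+l)}$ contained in a slight enlargement of that parallelogram, so
\[
  N_p \, c_A\lambda^{-(k+l)} \lesssim (r_k+r_{k+l})\bigl((r_k+r_{k+l})\lambda^{-k} + c_A\lambda^{-(k+l)}(r_k+r_{k+l})^{-1}\bigr),
\]
giving $N_p \lesssim (r_k+r_{k+l})^2\lambda^{l} + 1$. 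Since the $V_q$ are pairwise disjoint and each has measure $\lesssim r_{k+l}^2\lambda^{-(k+l)}$, we get $\leb(U_p\cap E_{k+l}) \le N_p\cdot C r_{k+l}^2\lambda^{-(k+l)}$, and summing over the $\asymp\lambda^k$ components $U_p$ of $E_k$ yields
\[
  \leb(E_k\cap E_{k+l}) \lesssim \bigl((r_k+r_{k+l})^2\lambda^{l}+1\bigr)r_{k+l}^2\lambda^{-l} = (r_k+r_{k+l})^2 r_{k+l}^2 + r_{k+l}^2\lambda^{-l}.
\]
In the setting of Proposition~\ref{prop:torus} the sequence $(M_k)$, hence $(r_k)$, is non-increasing, so $r_{k+l}\le r_k$, and the right-hand side is $\lesssim r_k^2 r_{k+l}^2 + r_k r_{k+l}\lambda^{-l}$; recalling $\leb(E_k) = \pi r_k^2$ and $\leb(E_{k+l}) = \pi r_{k+l}^2$ this is exactly \eqref{eq:Ekquasiindependence:torus}. (Without monotonicity one argues identically when $r_{k+l}\lambda^{-(k+l)}\le r_k\lambda^{-k}$, using the sharper bound $\leb(U_p\cap V_q)\lesssim\min(r_k,r_{k+l})\,r_{k+l}\lambda^{-(k+l)}$, while if $r_{k+l}\lambda^{-(k+l)} > r_k\lambda^{-k}$ then trivially $\leb(E_k\cap E_{k+l})\le\leb(E_k)=\pi r_k^2 \le \lambda^{-l}\sqrt{\leb(E_k)\leb(E_{k+l})}$.)

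The main obstacle is the geometric bookkeeping: turning the description of the components of $E_k$ as parallelograms into rigorous two-sided inclusions requires controlling the image of a round disk under $(A^k-I)^{-1}$ expressed in the non-orthogonal eigenbasis of $A$, tracking the fixed angle between the two eigenlines, and verifying disjointness of the components — all of which lean on Lemma~\ref{lem:separation}. The genuinely delicate step is the counting of $N_p$: one must choose the enlarged region and the scale $\rho$ in Lemma~\ref{lem:separation} so that the \emph{anisotropic} separation of periodic points yields precisely the factor $\lambda^{l}$ and nothing worse, since a crude isotropic separation estimate for the $(k+l)$-periodic points would be too weak here.
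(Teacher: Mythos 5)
Your proposal is correct and follows essentially the same route as the paper: decompose $E_k$ into $\asymp\lambda^{k}$ anisotropic components around the $k$-periodic points, use Lemma~\ref{lem:separation} to count the components of $E_{k+l}$ meeting each component of $E_k$, and bound each intersection's area (you prove \eqref{eq:Ekmeasure:torus} directly via the measure-preserving map $x\mapsto(A^k-I)x \bmod 1$, where the paper cites \cite{KKP}). The only substantive deviation is that your packing count $N_p\lesssim(r_k+r_{k+l})^2\lambda^{l}+1$ is weaker than the paper's $c\lambda^{l}r_kr_{k+l}+1$, so you invoke the monotonicity of $(M_k)$ to finish; that is legitimate here since monotonicity is a standing assumption in Section~\ref{sec:applications}, and your area-based count is arguably more carefully justified than the paper's one-dimensional separation count.
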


\begin{proof}
  The equality \eqref{eq:Ekmeasure:torus} is by Kirsebom, Kunde and
  Persson \cite[Section~6.2]{KKP}. We now prove the inequality
  \eqref{eq:Ekquasiindependence:torus}.

  We will estimate the measure of $E_k \cap E_{k+l}$.

  The matrix $A$ has one eigenvalue $\lambda$ outside the unit
  circle and one eigenvalue $\pm 1/\lambda$ inside the unit
  circle. We assume here that $\lambda > 1$. Things are the same
  if $\lambda < -1$.

  The set $E_k$ consists of ellipses centred around $k$-periodic
  points. More precisely, around each point $p$ such that
  $T^k p = p$, there is an ellipse with semi axes
  $(\lambda^{-k} - 1)^{-1} r_k$ in the stable direction and
  $(\lambda^{k} - 1)^{-1} r_k$ in the unstable direction. Hence
  the semi axes are not larger than $c r_k$ and
  $c \lambda^{-k} r_k$ for some constant $c$. In total there are
  $\lvert\det (A^k - I)\rvert \sim \lambda^k$ periodic points.

  In $E_k$, we replace each ellipse by a rectangle with sides
  $c r_k$ and $c \lambda^{-k} r_k$, and we denote the obtained
  set by $\hat{E}_k$. We do the same for $E_{k+l}$ and get that
  \[
    E_k \cap E_{k+l} \subset \hat{E}_k \cap \hat{E}_{k+l}.
  \]

  \begin{figure}[ht]
    \centering
    \includegraphics[width=120mm, height=55mm]{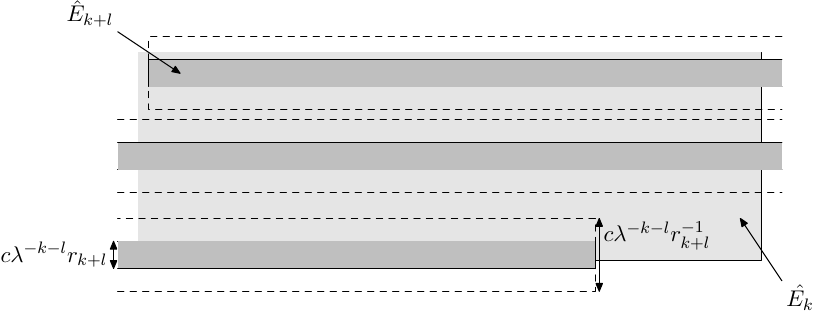}
    \caption{An illustration of the intersection of rectangles in
    $\hat{E}_k$ and $\hat{E}_{k+l}$.}
    \label{fig:rectangles}
  \end{figure}

  The set $\hat{E}_{k+l}$ consists of parallelograms, each
  contained in a rectangle with sides $c r_{k+l}$ and
  $c \lambda^{-k-l} r_{k+l}$. By Lemma~\ref{lem:separation}, in
  the unstable direction, the separation of these rectangles are
  at least $c \lambda^{-k-l} r_{k+l}^{-1}$.  Therefore, for each
  rectangle in $\hat{E}_k$, there are at most
  \[
    \frac{c \lambda^{-k} r_k}{c \lambda^{-k-l} r_{k+l}^{-1}} + 1
    = c \lambda^l r_k r_{k+l} + 1
  \]
  rectangles from $\hat{E}_{k+l}$ that intersect (see
  Figure~\ref{fig:rectangles}). Each such intersection has an
  area of at most $c r_{k+l} \lambda^{-k-l} r_k$.  We can
  therefore bound the area of the intersection
  $\hat{E}_k \cap \hat{E}_{k+l}$ by
  \[
    \leb (\hat{E}_k \cap \hat{E}_{k+l}) \leq c \lambda^k \cdot c
    r_k \lambda^{-k-l} r_{k+l} \cdot (c \lambda^l r_k r_{k+l} +
    1) \leq c r_k^2 r_{k+l}^2 + c \lambda^{-l} r_k r_{k+l}.
  \]
  By \eqref{eq:Ekmeasure:torus}, we therefore have
  \[
    \leb (E_k \cap E_{k+l}) \leq C \leb (E_k) \leb (E_{k+l})
    + C \lambda^{-l} \sqrt{\leb (E_k) \leb (E_{k+l})}.
    \qedhere
  \]
\end{proof}

We now have all the tools needed to prove Proposition~\ref{prop:torus}.
Recall that we assume that $(M_k)$ is a monotonically decreasing
sequence converging to $0$ such that
$M_k \leq C (\log k)^{-2}$ for some $C > 0$.

\begin{proof}[Proof of Proposition~\ref{prop:torus}]
  By \eqref{eq:Ekmeasure:torus}, we have that $m (E_k) = M_k$. It
  follows immediately by the easy part of the Borel--Cantelli
  lemma that if $\sum_{k=1}^\infty M_k < \infty$, then
  $m (\limsup E_k) = 0$, which establishes
  Proposition~\ref{prop:torus} in this case. This case is also
  stated in \cite[Theorem~D]{KKP}. From now on we assume that
  $\sum_{k=1}^\infty M_k = \infty$.
  
  It has been established that Assumptions~\ref{assumption:doc},
  \ref{assumption:frostman} and \ref{assumption:thinAnnuli} hold.

  We now show that Assumption~\ref{assumption:srt} holds with $N=0$ and
  $\delta = 1$. Since $\leb$ is the Lebesgue measure, we have that
  $M_k = \leb(B(x,r_k)) = \pi r_k^2$. Thus, by
  \eqref{eq:Ekmeasure:torus} in Lemma~\ref{lem:srt:torus}, we have that
  $\leb(E_k) = M_k$ and hence that $(\leb(E_k))_k$ is monotonically
  decreasing. As a result, we obtain from
  \eqref{eq:Ekquasiindependence:torus} in Lemma~\ref{lem:srt:torus}
  that
  \[
    \leb(E_k \cap E_{k+l}) \leq C\leb(E_k)^2 + C \leb(E_k) \lambda^{-l},
  \]
  which is exactly the condition in Assumption~\ref{assumption:srt}
  with $\mu=\leb$, $N = 0$, $\delta = 1$ and $\psi(l) = \lambda^{-l}$.
  Finally, since
  $M_k \leq C (\log k)^{-2} = C (\log k)^{-(2+N)/\delta}$, we may apply
  Theorem~\ref{thm:dynamical} and we conclude the proof.
\end{proof}

\subsection*{Acknowledgements}
The authors would like to thank Conze and Le Borgne for communicating
Lemma~\ref{lem:partition} and its proof to us through private
correspondence.

\end{document}